\documentclass[11pt,a4paper, reqno]{amsart}

\usepackage{a4wide}

\usepackage{amsfonts,amsthm,amssymb,amsmath,amscd,amsopn,mathabx,mathrsfs}
\usepackage[pdftex]{graphicx}
\usepackage{graphics}
\usepackage[matrix,arrow]{xy}
\usepackage[active]{srcltx}
\usepackage[draft]{todonotes}
\usepackage{tensor}

\usepackage[colorlinks=true]{hyperref}
\hypersetup{
colorlinks   = true,
citecolor    = blue
}


\usepackage{enumitem, hyperref}\hypersetup{colorlinks}

\makeatletter
\def\reflb#1#2{\begingroup
    #2%
    \def\@currentlabel{#2}%
    \phantomsection\label{#1}\endgroup
}
\makeatother

\usepackage{color}

\definecolor{darkred}{rgb}{1,0,0} 
\definecolor{darkgreen}{rgb}{0,0.8,0}
\definecolor{darkblue}{rgb}{0,0,1}

\hypersetup{colorlinks,
linkcolor=darkblue,
filecolor=darkgreen,
urlcolor=darkred,
citecolor=darkblue}


\newtheorem{thm}{Theorem}
\numberwithin{thm}{section}
\numberwithin{equation}{section}
\newtheorem{theorem}[thm]{Theorem}
\newtheorem*{theorem*}{Theorem}

\newtheorem{corollary}[thm]{Corollary}
\newtheorem*{corollary*}{Corollary}
\newtheorem{lemma}[thm]{Lemma}

\newtheorem{proposition}[thm]{Proposition}

\newtheorem*{conjecture*}{Conjecture}

\newtheorem*{question*}{Question}

\newtheorem*{definition*}{Definition}

\newtheorem*{definitions*}{Definitions}

\newtheorem*{rem*}{Remark}
\theoremstyle{remark}
\newtheorem{remark}[thm]{Remark}

\newtheorem*{remark*}{Remark}

\newtheorem*{remarks*}{Remarks}
\newtheorem*{example*}{Example}

\newtheorem*{examples*}{Examples}


\newcommand{\R}{\mathbb{R}}
\newcommand{\Z}{\mathbb{Z}}
\newcommand{\Q}{\mathbb{Q}}
\newcommand{\C}{\mathbb{C}}
\newcommand{\N}{\mathbb{N}}


\def\CP{{\mathbb C}P}
\def\RP{{\mathbb R}P}
\def\HP{{\mathbb H}P}
\def\CaP{{\text Ca}P}
\def\PP{{\mathbb P}}

\newcommand{\ep}{\epsilon}
\newcommand{\ga}{\gamma}
\newcommand{\Ga}{\Gamma}

\newcommand{\vr}{\varphi}
\newcommand{\om}{\omega}
\newcommand{\Om}{\Omega}

\newcommand    \TSp     {\widetilde{\mathrm{Sp}}}

\newcommand{\SO}{\mathrm{SO}}

\newcommand\numberthis{\addtocounter{equation}{1}\tag{\theequation}}

\def\lg{\langle}
\def\rg{\rangle}


\def\cz{{\mu}}

\def\PP{{\mathscr P}}
\def\HC{{\mathrm{HC}}}
\def\SH{{\mathrm{SH}}}
\def\CC{{\mathrm{CC}}}
\def\HF{{\mathrm{HF}}}
\def\H{{\mathrm{H}}}

\def\L{L^{2n+1}_p(\ell_0,\dots,\ell_n)}
\def\Lm{L^{m}_p(\ell_0,\dots,\ell_{d-1})}

\def\halpha{{\hat\alpha}}

\def\Im{{\text{Im}}}

\def\hga{{\hat\gamma}}
\def\mi{{\hat\mu}}
\def\M{{\widebar M}}

\newcommand{\p}{\partial}
\def\vk{\vec{k}}
\def\Nov{{\Lambda}}
\def\hH{{\hat H}}
\def\kmin{{k_{\text min}}}
\def\sec{{\mathfrak s}}
\def\bxi{{\widebar\xi}}


\begin{document}

\title[Periodic orbits of non-degenerate lacunary contact forms]{Periodic orbits of non-degenerate lacunary\\ contact forms on prequantization bundles}

\author[Miguel Abreu]{Miguel Abreu}
\author[Leonardo Macarini]{Leonardo Macarini}

\address{Center for Mathematical Analysis, Geometry and Dynamical Systems,
Instituto Superior T\'ecnico, Universidade de Lisboa, 
Av. Rovisco Pais, 1049-001 Lisboa, Portugal}
\email{mabreu@math.tecnico.ulisboa.pt}

\address{IMPA,
Estrada Dona Castorina, 110, Rio de Janeiro, 22460-320, Brazil}
\email{leonardo@impa.br}

\subjclass[2010]{53D40, 37J10, 37J55} \keywords{Closed orbits, Reeb flows, Conley-Zehnder index, equivariant symplectic homology}

\thanks{MA was partially funded by FCT/Portugal through UID/MAT/04459/2020. LM was partially supported by FAPERJ and CNPq/Brazil.}

\begin{abstract}
A non-degenerate contact form is lacunary if the indexes of every contractible periodic Reeb orbit have the same parity. To the best of our knowledge, every contact form with finitely many periodic orbits known so far is non-degenerate and lacunary. We show that every non-degenerate lacunary contact form on a suitable prequantization of a closed symplectic manifold $B$ has precisely $r_B$ contractible closed orbits, where $r_B=\dim \H_*(B;{\mathbb Q})$. Examples of such prequantizations include the standard contact sphere, the unit cosphere bundle of a compact rank one symmetric space (CROSS) and many others. We also consider some prequantizations of orbifolds, like lens spaces and the unit cosphere bundle of lens spaces, and obtain multiplicity results for these prequantizations.
\end{abstract}

\maketitle

\tableofcontents

\section{Introduction}

\subsection{Introduction and Main Results}

Let $(M^{2n+1},\xi)$ be a closed co-oriented contact manifold. Let $\alpha$ be a contact form supporting $\xi$ (i.e. such that $\ker \alpha=\xi$) and denote by $R_\alpha$ the corresponding Reeb vector field uniquely characterized by the equations $\iota_{R_\alpha} d\alpha = 0$ and $\alpha (R_\alpha) = 1$. Reeb flows form a prominent class of Hamiltonian systems on regular energy levels. Indeed, if $M$ is a contact type hypersurface in a symplectic manifold $W$ and $H: W \to \R$ is a Hamiltonian such that $M$ is a regular energy level of $H$, then the Hamiltonian flow of $H$ on $M$ is a reparametrization of the Reeb flow. There are several important examples, including proper homogeneous Hamiltonians $H: \R^{2n} \to \R$ and geodesic flows.

In this work, we will address the problem of the multiplicity of periodic orbits of Reeb flows, that is, the number of simple (i.e. non-iterated) periodic Reeb orbits. Two key tools to attack this problem are \emph{(positive) equivariant symplectic homology} and \emph{linearized contact homology}. The latter was introduced by Eliashberg, Givental and Hofer in their seminal paper \cite{EGH}. The first was introduced by Viterbo \cite{Vit} and developed by Bourgeois and Oancea \cite{BO09,BO10, BO13a, BO13b, BO17}. As proved in \cite{BO17}, these homologies are isomorphic whenever linearized contact homology is well defined.

When the linearized contact homology (with rational coefficients) is unbounded, that is, the dimension of the corresponding vector spaces goes to infinity for some sequence of degrees $k_i \to \infty$ we have that every contact form on $M$ has infinitely many simple closed orbits \cite{HM}; cf. \cite{McL1}. We say that this is the \emph{homologically unbounded} case; see the survey \cite{Mac}.

Thus, in the multiplicity problem the interesting case is the \emph{homologically bounded} one. This case is much more involved since, in this context, we do have examples of Reeb flows with finitely many simple periodic orbits. Let us consider a prequantization $S^1$-bundle $(M^{2n+1},\xi)$ of a closed integral symplectic manifold $(B,\om)$. This can be considered as a prototypical example of a homologically bounded contact manifold. As a matter of fact, under suitable hypotheses on $B$, its positive equivariant symplectic homology is given by a sum of copies of the singular homology of the basis with a shift in the degree; see, for instance, \cite{AM1,AM2,GGM1,GGM2} and Section \ref{sec:esh_preq}.

When $\om$ is aspherical, that is, $\om|_{\pi_2(B)}=0$, it was proved in \cite{GGM1,GGM2}, under minor extra assumptions on $M$ (probably just technical), that every contact form on $M$ has infinitely simple closed orbits; see also \cite{GS}. If $\om$ is not aspherical the problem is more delicate since there are examples of contact forms with finitely many closed orbits. Indeed, the standard contact sphere $S^{2n+1}$ is a prequantization of $\CP^n$ and it has contact forms with precisely $n+1$ simple closed orbits given by the irrational ellipsoids. More generally, every prequantization of a closed symplectic manifold admitting a Hamiltonian circle action with isolated fixed points has a contact form with finitely many simple closed orbits, and all these symplectic manifolds are necessarily not aspherical. Thus, if $\om$ is not aspherical we have, in general, to obtain a, ideally sharp, lower bound for the number of simple closed orbits.

A result in this direction is the following theorem proved by Ginzburg, G\"urel and Macarini \cite{GGM2}. In order to state it, we need some preliminary definitions. A closed Reeb orbit $\ga$ is \emph{non-degenerate} if its linearized Poincar\'e map does not have eigenvalue one. A contact form $\alpha$ is non-degenerate if every closed orbit $\ga$ of $\alpha$ is non-degenerate. Given a closed Reeb orbit $\ga$, let $\cz(\ga)$ and $\mi(\ga)$ be its Conley-Zehnder index and mean index respectively (see, for instance, \cite{SZ}). A contact form $\alpha$ supporting $\xi$ is \emph{index-positive} (resp.\ \emph{index-negative}) if the mean index $\mi(\ga)$ is positive (resp.\ negative) for every contractible periodic orbit $\ga$ of $\alpha$. We say that $\alpha$ is \emph{index-admissible} if every contractible closed orbit $\ga$ of $\alpha$ satisfies $\cz(\ga)>3-n$. Given a symplectic manifold $B$, let
\[ 
c_B := \inf \{k \in \N \mid \exists S \in \pi_2(B) \text{ with } \langle c_1(TB),S \rangle = k\} 
\]
be its minimal Chern number.

\begin{theorem}[\cite{GGM2}]
\label{thm:GGM}
Let $(M^{2n+1},\xi)$ be a prequantization $S^1$-bundle of a closed symplectic manifold $(B,\om)$ such that $\om|_{\pi_2(B)}\neq 0$, $c_B>n/2$ and $\H_{k}(B;\Q)=0$ for every odd $k$. Let $\alpha$ be a non-degenerate contact form supporting $\xi$ which is index-positive and has no contractible periodic orbits $\ga$ such that $\cz(\ga)=0$ if $n$ is odd or $\cz(\ga) \in \{0,\pm 1\}$ if $n$ is even. Assume that one of the following two conditions holds:
\begin{itemize}
\item[\reflb{cond:F1}{(F)}]  $M$ admits a strong symplectic filling $(W,\Om)$ such that $\Om|_{\pi_2(W)}=0$ and $c_1(TW)|_{\pi_2(W)}=0$, and the map $\pi_1(M) \to \pi_1(W)$ induced by the inclusion is injective.
\item[\reflb{cond:NF1}{(NF)}]  $c_1(\xi)=0$, $c_1(TB)|_{\pi_2(B)}=\lambda\omega|_{\pi_2(B)}$ for some $\lambda>0$ and $\alpha$ is index-admissible.
\end{itemize}
Then $\alpha$ carries at least $r_B$ geometrically distinct contractible periodic orbits, where $r_B:=\dim \H_*(B;\Q)$.
\end{theorem}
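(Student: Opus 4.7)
The plan is to use positive $S^1$-equivariant symplectic homology (in case \ref{cond:F1}) or linearized contact homology (in case \ref{cond:NF1}) of $M$, combined with an index-iteration argument in the spirit of Long--Zhu. First I would invoke the hypotheses to ensure the relevant theory is well-defined: in case \ref{cond:F1}, the filling gives a canonical $SH^{S^1,+}_\ast(W)$ and an isomorphism with positive $S^1$-equivariant symplectic homology of $M$ via the Viterbo morphism; in case \ref{cond:NF1}, the conditions $c_1(TB)|_{\pi_2(B)}=\lambda\omega|_{\pi_2(B)}$ with $\lambda>0$ and index-admissibility $\cz(\gamma)>3-n$ rule out negative index bubbling and make linearized contact homology $HC_\ast(M,\alpha)$ well-defined. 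By the computations recalled in Section~\ref{sec:esh_preq}, under the prequantization assumption together with $c_B>n/2$ and $\H_{\mathrm{odd}}(B;\Q)=0$, the resulting homology is of the form
\[
\bigoplus_{k\geq 1} \H_{\ast-s_k}(B;\Q),
\]
concentrated in a single parity, with explicit shifts $s_k$ determined by the Chern number of $\xi$ and $c_B$; in particular it has asymptotic ``density'' $r_B/\Delta$ per unit of degree for some constant $\Delta>0$ determined by the prequantization data.

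The second step is to control the chain complex of the given $\alpha$. By the assumed parity restriction on small Conley--Zehnder indices (the forbidden set $\{0\}$ or $\{0,\pm1\}$ depending on the parity of $n$), together with index-positivity and the standard iteration inequalities $|\cz(\gamma^k)-k\hat\mu(\gamma)|\leq n$, one forces every iterate $\gamma^k$ of every contractible simple orbit $\gamma$ to have Conley--Zehnder index of the same parity as the generators appearing in Step 1. In particular every iterate is a \emph{good} orbit and contributes a one-dimensional $\Q$-summand to the equivariant chain complex in a predictable degree $\approx k\hat\mu(\gamma)$. This is the exact place where the non-degenerate lacunary structure of $\alpha$ is mirrored by the lacunary structure of the target homology, and where the dichotomy between $n$ odd and $n$ even in the hypotheses enters.

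For the final step, suppose $\alpha$ has only $N$ simple contractible orbits $\gamma_1,\dots,\gamma_N$ with mean indices $\hat\mu_i:=\hat\mu(\gamma_i)>0$. Truncating to a degree window $[L,L+T]$ with $T\gg 0$, the total number of chain generators inside the window is at most $\sum_{i=1}^N\bigl(T/\hat\mu_i+O(1)\bigr)$, while by Step 1 the homology in that window has dimension at least $r_BT/\Delta-O(1)$. Letting $T\to\infty$ produces the resonance inequality
\[
\sum_{i=1}^N\frac{1}{\hat\mu_i}\;\geq\;\frac{r_B}{\Delta}.
\]
Combined with a uniform upper bound $\hat\mu_i\leq\Delta$, which comes from comparing periods of Reeb orbits of $\alpha$ with the period of the standard prequantization Reeb flow (and uses $c_B>n/2$ to avoid index collapse), this forces $N\geq r_B$. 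The main obstacle is Step 2: squeezing the index-parity control through all iterates while using only the restrictions imposed on simple orbits and on $n$-parity; the uniform bound on $\hat\mu_i$ in Step 3 is a secondary subtlety, requiring the minimal Chern number hypothesis to prevent ``very slow'' orbits that could otherwise evade the counting.
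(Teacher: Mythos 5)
Your plan collapses at Step 2. Theorem~\ref{thm:GGM} does \emph{not} assume that $\alpha$ is lacunary: the only index restrictions are index-positivity and exclusion of the small set $\{0\}$ (resp.\ $\{0,\pm1\}$). These hypotheses say nothing about the parity of $\cz(\gamma^k)$ for general iterates $k$. For example, a non-degenerate hyperbolic orbit with $\cz(\gamma)=3$ (allowed, since $3\notin\{0\}$) has $\cz(\gamma^2)=6$; the parity alternates and the orbit is never ruled out. So the claim that ``one forces every iterate $\gamma^k$\ldots to have Conley--Zehnder index of the same parity as the generators appearing in Step 1'' is false, and with it the assertion that every iterate ``contributes a one-dimensional $\Q$-summand.'' In short, you cannot conclude that the equivariant differential vanishes. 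That conclusion is precisely what separates Theorem~\ref{thm:main} (which \emph{does} assume lacunary, and where the differential vanishes for free) from Theorem~\ref{thm:GGM} (which does not, and where the proof in~\cite{GGM2} must work with a genuinely nontrivial differential).

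Step 3 also does not close. Even granting the inequality $\sum_{i=1}^N 1/\hat\mu_i\geq r_B/\Delta$, a uniform \emph{upper} bound $\hat\mu_i\leq\Delta$ gives $\sum 1/\hat\mu_i\geq N/\Delta$, which points the wrong way for concluding $N\geq r_B$. What you would need is a uniform \emph{lower} bound $\hat\mu_i\geq\Delta$, and no such bound holds for a general index-positive $\alpha$: mean indices are not controlled by the period of the reference prequantization Reeb flow, and $c_B>n/2$ does not rescue this. The resonance relation \eqref{eq:resonance} of Section~\ref{sec:resonance} is an \emph{equality} and it counts nothing by itself.

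The missing ingredient is the Index Recurrence Theorem (Theorem~\ref{thm:IRT}, i.e.\ the common index jump / enhanced Long--Zhu machinery). Its role is to synchronize \emph{all} simple orbits into a common index window: one finds iterates $\gamma_i^{k_i}$ all of whose indices are within $\eta$ of a single degree $d$ (divisible by $2c_B$), so that below $d$ the contributions of each simple orbit are exactly $k_i-O(1)$ many iterates, and above $d$ nothing contributes in the relevant range. Together with the resonance identity $\sum k_i = s\, r_B$ (Lemma~\ref{lemma:resonance} here, or \cite[Lemma~5.1]{GGM2}) this produces the precise counting that replaces your asymptotic density argument. Without this synchronization, the asymptotic bookkeeping you propose leaks: iterates of slow orbits and fast orbits fall into the window at incommensurate rates, and the differential can cancel pairs of generators that your count treats as distinct homology classes.

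So while your general frame (use the computed $\HC^0_\ast(M)$ from Section~\ref{sec:esh_preq}, compare chain generators against Betti numbers, argue by resonance) is the right circle of ideas, Steps 2 and 3 each contain a genuine error, and the argument needs Theorem~\ref{thm:IRT} (or the enhanced common index jump theorem of~\cite{DLW}) to make the count precise.
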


\begin{remark}
\label{rmk:general multiplicity1}
The theorem in \cite{GGM2} (namely, \cite[Theorem 2.1]{GGM2}) is actually a bit more general and relaxes the assumption that $\H_{k}(B;\Q)=0$ for every odd $k$ if $c_B>n$. However, in this case the lower bound obtained in \cite{GGM2} is no longer given by the total rank of the homology of $B$. To the best of our knowledge, every prequantization of a symplectic manifold $B$ that admits a contact form with finitely many closed orbits known so far has the property that $\H_*(B;\Q)$ is lacunary. 
\end{remark}

\begin{remark}
\label{rmk:general multiplicity2}
In the theorem in \cite{GGM2}, under the assumption \ref{cond:F1}, the contact form is allowed to be index-negative instead of being index-positive. We do not state the theorem in \cite{GGM2} in its original form because its statement is somewhat technical and involved, and it is not necessary for the purposes of this work.
 \end{remark}

\begin{remark}
The condition that $\Om|_{\pi_2(W)}=0$ in hypothesis \ref{cond:F1} can be dropped and the condition that $c_1(\xi)=0$ in assumption \ref{cond:NF1} can be relaxed to the condition that $c_1(\xi)$ is torsion; see Remark \ref{rmk:hypotheses}.
\end{remark}

Hypothesis \ref{cond:F1} means that $M$ admits a ``nice'' symplectic filling and the assumption \ref{cond:NF1} does not require the existence of a filling at all, with the expense that the contact form has to be index-admissible. This last condition allows us to define the equivariant symplectic homology of $M$ without a filling, using its symplectization; see Section \ref{sec:esh}.

Examples satisfying the hypotheses of the previous theorem include the standard contact sphere $S^{2n+1}$ and the unit cosphere bundle of a compact rank one symmetric space (CROSS) with dimension bigger than two. More precisely, as already mentioned, $S^{2n+1}$ is a prequantization of $\CP^n$, and its obvious filling in $\R^{2n+2}$ clearly satisfies the hypotheses above. A connected Riemannian manifold $N$ is a symmetric space if for every $p \in N$ there exists an isometry $f_p: N \to N$ such that $f_p(p)=p$ and $f_p \circ \exp_p(v)=\exp_p(-v)$ for every $v \in T_pN$. The rank of a symmetric space $N$ is the maximal dimension of a flat totally geodesic submanifold in $N$. By the classification of symmetric spaces, a CROSS is given by one of the following manifolds: $S^m$, $\RP^m$, $\CP^m$, $\HP^m$ and $\CaP^2$; see \cite{Bes} for details. Thus the filling of the unit cosphere bundle $S^*N$ given by the unit codisk bundle $D^*N$ in $T^*N$ meets the condition \ref{cond:F1} of the previous theorem whenever the (real) dimension of $N$ is bigger than two. (In dimension two, we have that $N$ is either $S^2$ or $\RP^2$ which are the only cases where the map $\pi_1(S^*N) \to \pi_1(D^*N)$ is not injective. However, in these cases it is well known that every Reeb flow on $S^*N$ has at least two simple closed orbits.)

Every CROSS $N$ admits a metric such that all of its geodesics are periodic with the same minimal period; in other words, the geodesic flow generates a free circle action on $S^*N$. (To the best of our knowledge, a CROSS is the only known example so far of a closed manifold admitting such a metric \cite{Bes}.) Thus the unit cosphere bundle $S^*N$ is a prequantization of a closed symplectic manifold $(B,\om)$. Moreover, a homological computation shows that $\H_k(B;\Q)=0$ for every odd $k$; see \cite[page 141]{Zil}.  In this case, the total rank $r_B$ of $\H_*(B;\Q)$ and the minimal Chern number $c_B$ are given by Table \ref{tab:prequantizations}.

\begin{table}[tb]
\centering
\begin{tabular}{ | c | c | c | }
\hline
Prequantization & $r_B = \dim \H_*(B;\Q)$ & $c_B$ \\
\hline
\rule{0pt}{0.4cm}
$S^{2n+1}$ & $n+1$ & $n+1$ \\
$S^*S^{2}$ or $S^*\RP^2$ & $2$ & $2$ \\
$S^*S^{m}$ or $S^*\RP^m$ \text{with} $m>2$ \text{even} & $m$ & $m-1$ \\
$S^*S^{m}$ or $S^*\RP^m$ \text{with} $m$ \text{odd} & $m+1$ & $m-1$ \\
$S^*\CP^{m}$ & $m(m+1)$ & $m$ \\
$S^*\HP^m$ & $2m(m+1)$ & $2m+1$ \\
$S^*\CaP^2$ & $24$ & $11$ \\
\hline
\end{tabular}
\vskip .2cm
\caption{Some prequantizations and the corresponding $r_B$ and $c_B$.}
\label{tab:prequantizations}
\end{table}

Hence, we have the following corollary, which was previously proved for the standard contact sphere by Duan, Liu, Long and Wang in \cite{DLLW} and for Finsler metrics on a simply connected CROSS by Duan, Long and Wang in \cite{DLW}.

\begin{corollary}[\cite{GGM2}]
\label{cor:sphere&cross}
Let $(M,\xi)$ be either the standard contact sphere $S^{2n+1}$ or the unit cosphere bundle $S^*N$ of a CROSS and let $\alpha$ be a contact form supporting $\xi$. Assume that $\alpha$ satisfies the conditions of the previous theorem. Then $\alpha$ has at least $r_B$ geometrically distinct periodic orbits, where $r_B$ is given by Table \ref{tab:prequantizations}.
\end{corollary}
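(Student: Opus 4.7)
The plan is to derive the corollary directly from Theorem \ref{thm:GGM} by checking that, in each of the listed examples, all hypotheses of that theorem are satisfied under the filling condition \ref{cond:F1}, and that the numerics in Table \ref{tab:prequantizations} are correct. Since the theorem is already established, the work reduces to a case-by-case verification.

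For the standard contact sphere $S^{2n+1}$, viewed as a prequantization of $(\CP^n,\omega_{\text{FS}})$, I would take $(W,\Omega)=(B^{2n+2},\omega_{\text{std}})$ as the filling. Then $W$ is contractible, so $\pi_2(W)=0$ and the injectivity of $\pi_1(M)\to\pi_1(W)$ is trivial, verifying \ref{cond:F1}. The minimal Chern number of $\CP^n$ is $c_B=n+1>n/2$, and $\H_*(\CP^n;\Q)$ is concentrated in even degrees with total rank $n+1$, giving $r_B=n+1$. Together with $\omega|_{\pi_2(\CP^n)}\neq 0$, all hypotheses of Theorem \ref{thm:GGM} are met, so $\alpha$ has at least $n+1$ simple contractible closed orbits.

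For the unit cosphere bundle $S^*N$ of a CROSS with $\dim N>2$, I would take $(W,\Omega)=(D^*N,d\lambda)$ with $\lambda$ the canonical Liouville form. Exactness of $\Omega$ gives $\Omega|_{\pi_2(W)}=0$ trivially. Since $D^*N$ deformation retracts onto $N$ and $N$ is a CROSS (simply connected except for $\RP^m$), one checks that $\pi_1(S^*N)\to\pi_1(D^*N)$ is injective in the stated dimension range. Moreover, $T^*N$ admits an almost complex structure under which $TT^*N|_N\cong TN\otimes\C$, so $c_1(TW)$ is $2$-torsion on $N$ and, together with the fact that $\pi_2(N)$ is generated by classes where the restriction of $c_1$ vanishes for each CROSS (using the Chern classes of the sphere/projective bases), we obtain $c_1(TW)|_{\pi_2(W)}=0$. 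The computation of $r_B$, $c_B$, and the vanishing of odd-degree rational homology of the base $B$ follows the references \cite{Bes,Zil} cited in the excerpt; these are the entries in Table \ref{tab:prequantizations}. Finally, $\omega|_{\pi_2(B)}\neq 0$ because the $S^1$-bundle $S^*N\to B$ is nontrivial (with Euler class represented by $\omega$ up to a constant).

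The main obstacle is the careful verification of the topological hypotheses for the CROSS case, in particular the injectivity of $\pi_1(S^*N)\to\pi_1(D^*N)$ (which, as the excerpt notes, fails only when $N=S^2,\RP^2$, hence the restriction to $\dim N>2$) and the vanishing of $c_1(TW)$ on $\pi_2(W)$. Everything else is either stated explicitly in the excerpt, tabulated, or a direct consequence of the definitions. Once these conditions are in place, Theorem \ref{thm:GGM} applies and delivers the claimed lower bound $r_B$ on the number of geometrically distinct contractible periodic orbits of $\alpha$.
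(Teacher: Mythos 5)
Your proposal is correct and follows essentially the same route as the paper's own (informal) proof: verify that the stated fillings $B^{2n+2}$ and $D^*N$ satisfy hypothesis \ref{cond:F1}, use the fact that $S^*N$ is a prequantization bundle for the Katok--Ziller metric, invoke the homological computations of the base (citing Besse/Ziller and the table), and apply Theorem~\ref{thm:GGM}. One small streamlining: once you observe $TT^*N|_N\cong TN\otimes\C$, the complex bundle is isomorphic to its conjugate, so $c_1(TW)$ is $2$-torsion in $H^2(W;\Z)$, and since $\langle c_1(TW),S\rangle\in\Z$ for any $S\in\pi_2(W)$, torsion already forces $c_1(TW)|_{\pi_2(W)}=0$ — the extra step about generators of $\pi_2(N)$ is unnecessary.
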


Theorem \ref{thm:GGM} is sharp. Indeed, the prequantizations in the previous corollary admit non-degenerate contact forms with precisely $r_B$ geometrically distinct periodic orbits. These contact forms are given by irrational ellipsoids and the Katok-Ziller Finsler metrics \cite{Zil}.

To the best of our knowledge, all the examples of Reeb flows with finitely many closed orbits known so far \emph{are non-degenerate}. When the contact form is non-degenerate, under the index assumptions of Theorem \ref{thm:GGM}, we have at least $r_B$ closed orbits and, as noticed before, this lower bound is sharp. This raises the following hard question:

\vskip .3cm
\noindent {\bf Question:} Let $(M,\xi)$ be a prequantization of a closed symplectic manifold $B$. Is it true that every contact form supporting $\xi$ has at least $r_B$ periodic orbits?
\vskip .2cm

The answer is positive in the very particular case of $M=S^3$ with the standard contact structure. It was proved independently by Cristofaro-Gardiner and Hutchings (in the general case of three dimensional manifolds) \cite{CGH} and Ginzburg, Hein, Hryniewicz and Macarini \cite{GHHM}. In higher dimensions, there are several partial positive results under some hypotheses on the contact form, like index assumptions, convexity and symmetry; see the survey \cite{Mac}.

To the best of our knowledge, all the examples known so far of contact forms with finitely many closed orbits on prequantizations of a symplectic manifold $B$ \emph{have precisely $r_B$ periodic orbits}. This raises the following even harder question:

\vskip .3cm
\noindent {\bf Question:} Let $(M,\xi)$ be a prequantization of a closed symplectic manifold $B$. Is it true that every contact form supporting $\xi$ has either $r_B$ or infinitely many periodic orbits?
\vskip .2cm

The answer is also positive in the case of $M=S^3$ with the standard contact structure as was recently proved by Cristofaro-Gardiner, Hryniewicz, Hutchings and Liu \cite{CGHHL2}. In higher dimensions, it is a widely open and very difficult problem; see however the recent work of Cineli-Ginzburg-G\"urel \cite{CGG} with important results on $S^{2n+1}$ under assumptions of symmetry, convexity and non-degeneracy of the contact form.

A periodic orbit is \emph{elliptic} if every eigenvalue of its linearized Poincar\'e map has modulus one. All the examples of contact forms with finitely many closed orbits that we know so far are non-degenerate and have the property that \emph{every periodic orbit is elliptic}. It is well known that if a non-degenerate periodic orbit $\ga$ is elliptic then $\cz(\ga)=n\ \text{(mod 2)}$. In particular, these contacts forms are \emph{geometrically perfect}, that is, the indexes of every periodic orbit in the same free homotopy class have the same parity. Another example of geometrically perfect contact forms are those with Anosov Reeb flows (e.g. the geodesic flow of a metric with negative sectional curvature) \cite{MP}. In these examples, every periodic orbit is hyperbolic. We do not know examples of geometrically perfect contact forms with elliptic \emph{and} hyperbolic orbits.

We say that a non-degenerate contact form $\alpha$ is \emph{lacunary} if the indexes of every contractible periodic orbit have the same parity. In particular, if every periodic orbit of $\alpha$ is elliptic then $\alpha$ is lacunary. So, all the examples that we know so far of contact forms with finitely many closed orbits are lacunary. Now, we can state the main result of this work, which represents a small step towards the last question. In what follows, a simple contractible closed orbit is a contractible orbit which is not an iterate of another contractible orbit, although it can be the iterate of a non-contractible orbit.

\begin{theorem}
\label{thm:main}
Let $(M^{2n+1},\xi)$ be a prequantization $S^1$-bundle of a closed symplectic manifold $(B,\om)$ such that $\om|_{\pi_2(B)}\neq 0$ and $\H_{k}(B;\Q)=0$ for every odd $k$. Assume that $B$ is spherically positive monotone, that is,  $c_1(TB)|_{\pi_2(B)}=\lambda\omega|_{\pi_2(B)}$ for some $\lambda>0$. Let $\alpha$ be a non-degenerate lacunary contact form supporting $\xi$. Suppose that one of the following two conditions holds:
\begin{itemize}
\item[\reflb{cond:F2}{(F)}] $M$ admits a strong symplectic filling $W$ such that $c_1(TW)|_{\pi_2(W)}=0$ and the map $\pi_1(M) \to \pi_1(W)$ induced by the inclusion is injective.
\item[\reflb{cond:NF2}{(NF)}] $c_1(\xi)|_{H_2(M,\Q)}=0$, $c_B>1$ and $\alpha$ is index-admissible.
\end{itemize}
Then $\alpha$ has precisely $r_B$ simple contractible closed orbits, where $r_B=\dim \H_*(B;\Q)$.
\end{theorem}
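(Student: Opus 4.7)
The plan is to exploit the known structure of the contractible part of positive $S^1$-equivariant symplectic homology for prequantization bundles. Under either \ref{cond:F2} (via the filling) or \ref{cond:NF2} (via the symplectization together with index-admissibility), $\SH^{S^1,+}_*(M)^{\mathrm{contr}}$ is well-defined, bounded below, and decomposes as a direct sum of shifted copies of $\H_*(B;\Q)$ indexed by an iterate level $k\ge 1$; see Section~\ref{sec:esh_preq}. The hypothesis $\H_k(B;\Q)=0$ for odd $k$ concentrates this homology in degrees of a single parity $\epsilon\in\{0,1\}$ determined by $n$ and $c_B$, while $c_B>n/2$ together with the spherical positive monotonicity $c_1(TB)|_{\pi_2(B)}=\lambda\omega|_{\pi_2(B)}$ will provide the action filtration and the mean-index quantization used in the count below.

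\textbf{Vanishing of the differential.}
Lacunarity means every contractible Reeb orbit has Conley--Zehnder index of a common parity $\delta\in\{0,1\}$, so the $S^1$-equivariant chain complex on the contractible component is concentrated in degrees of a single parity. Each piece $d_j u^j$ of the differential $d=\sum_{j\ge 0}d_j u^j$ shifts parity by one (since $|u|=2$ and $|d|=-1$), hence the entire differential vanishes identically. The chain complex therefore coincides degreewise with $\SH^{S^1,+}_*(M)^{\mathrm{contr}}$. In particular, $\delta$ must coincide with $\epsilon$, otherwise chain generators would survive to homology in forbidden degrees; this automatically rules out $\cz(\gamma)=0$ for $n$ odd and $\cz(\gamma)\in\{0,\pm 1\}$ for $n$ even, recovering part of the index hypotheses of Theorem~\ref{thm:GGM}.

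\textbf{Counting the simple contractible orbits.}
The next step combines chain-equals-homology with the mean-index quantization $\mi(\gamma)\in 2c_B\Z$ for contractible orbits, which is a consequence of the monotonicity and the prequantization structure. First, $\mi(\gamma)\le 0$ is impossible for contractible $\gamma$: if $\mi(\gamma)<0$ the iterates produce chain classes at degrees tending to $-\infty$, violating the lower bound on the support of $\SH^{S^1,+}$, while $\mi(\gamma)=0$ would accumulate infinitely many iterates into a bounded range of degrees, contradicting the finite rank at each action level. Hence $\mi(\gamma_i)\in\{2c_B,4c_B,\dots\}$ for every simple contractible orbit $\gamma_i$, and $\gamma_i$ sits at action level $k_i=\mi(\gamma_i)/(2c_B)\ge 1$ with iterates $\gamma_i^j$ landing at level $jk_i$ (all ``good'' in the SFT sense by lacunarity). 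Writing $N_k$ for the number of simple contractible orbits with $k_i=k$, matching chain and homology ranks at each action level yields
\[
\sum_{d\mid \ell}N_d \;=\; r_B\quad\text{for every }\ell\ge 1.
\]
Setting $\ell=1$ gives $N_1=r_B$, and M\"obius inversion (equivalently, induction on $\ell$) then forces $N_k=0$ for all $k\ge 2$. Therefore the total number of simple contractible orbits is $\sum_k N_k=r_B$.

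\textbf{Main obstacle.}
The principal technical content is the mean-index quantization $\mi(\gamma)\in 2c_B\Z$ together with the identification of the action filtration on the chain complex with the iterate-level decomposition of $\SH^{S^1,+}$. The monotonicity converts actions into mean indexes, and the prequantization structure quantizes the actions; but the delicate case is that of a simple contractible orbit $\gamma=\eta^m$ arising as the minimal contractible iterate of a simple non-contractible $\eta$, where one must check both that the quantization still holds and that no ``bad'' iterate in the SFT sense is excluded from the chain complex (which would turn the equality $\sum_{d\mid\ell}N_d=r_B$ into a mere inequality and spoil the induction). This is where the hypotheses $c_B>n/2$ and (for \ref{cond:NF2}) the index-admissibility of $\alpha$ enter most essentially.
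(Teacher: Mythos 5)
Your first two steps are sound and match the paper: lacunarity forces the contractible part of the chain complex to be concentrated in a single parity, the degree $-1$ differential therefore vanishes, $\HC^0_*(M)\cong\CC^0_*(\alpha)$, and the positivity of $\kmin=2c_B-n$ together with \eqref{eq:CH} gives $\cz(\gamma)\ge\kmin\ge1$ and $\mi(\gamma)>0$ for every contractible orbit. This is exactly the starting point of the paper's argument.

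The counting step, however, rests on a claim that is false. You assert a \emph{mean-index quantization} $\mi(\gamma)\in 2c_B\Z$ for contractible orbits of an arbitrary non-degenerate contact form supporting $\xi$, deduce an ``action-level'' $k_i=\mi(\gamma_i)/(2c_B)\in\N$ for each simple orbit, and then run a M\"obius-inversion/divisibility argument. For the Boothby--Wang form the closed orbits are iterated fibers and their indexes do jump by exactly $2c_B$, but for a generic supporting contact form they do not: the mean index $\mi(\gamma)$ is a real number with no quantization. The model example already defeats this: for an irrational ellipsoid $E(a_0,\dots,a_n)\subset\C^{n+1}$ (a prequantization of $\CP^n$, $c_B=n+1$) the simple orbits are the coordinate circles with $\mi(\gamma_i)=2\sum_j a_i/a_j$, which is irrational generically and certainly not an integer multiple of $2c_B=2(n+1)$. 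Consequently there is no integer $k_i$, the equality $\sum_{d\mid\ell}N_d=r_B$ has no meaning, and the whole induction collapses. Relatedly, your identification of the $m$-indexed copies of $\H_{*-2mc_B+n}(B)$ in \eqref{eq:CH} with ``action levels'' populated by specific iterates $\gamma_i^{jk_i}$ is not something the isomorphism provides: \eqref{eq:CH} is a degree decomposition, not an assignment of individual Reeb orbits to summands, and the degrees $\cz(\gamma_i^j)$ of a general non-degenerate orbit wander around $j\mi(\gamma_i)$ within an error of size $n$ rather than landing on the nose in a fixed summand.

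This is precisely the difficulty the paper's proof is designed to circumvent. Instead of quantization, the paper (i) proves finiteness of simple contractible orbits via G\"urel's combinatorial lemma (Lemma \ref{lemma:combinatorial}) about iterates falling into windows of length $2n$, (ii) uses the index recurrence theorem (Theorem \ref{thm:IRT}, which packages the common index jump phenomenon) to find synchronized iterates $\gamma_i^{k_i}$ whose indexes cluster near a common even integer $d=2sc_B$ with the controlled error properties \ref{cond:i}--\ref{cond:iii}, and (iii) ties $\sum_i k_i$ to $sr_B$ via the resonance relation \eqref{eq:resonance} (Lemma \ref{lemma:resonance}), not via any a priori quantization of $\mi(\gamma_i)$. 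The lower bound $r$ orbits and the upper bound from (i) then match to give exactly $r_B$. Your proof would need to replace the quantization claim by some version of this index-recurrence/resonance machinery to go through; as written, the crucial step is unsupported and false in general.
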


We originally proved this theorem under the assumption that $c_B>n/2$ as in Theorem \ref{thm:GGM}. Indeed, the proof given in Section \ref{sec:proof main} assumes this hypothesis. In Appendix \ref{appendix:c_B}, written jointly with Baptiste Serraille, we dropped completely this assumption.

\begin{remark}
Instead of considering only contractible orbits, we can consider \emph{every} periodic orbit. Then the assumption that the map $\pi_1(M) \to \pi_1(W)$ is injective in hypothesis  \ref{cond:F2} can be dropped. The price, of course, is that in this case the condition to be lacunary is stronger, since we ask that the indexes of every closed orbit have the same parity instead of just the indexes of the contractible ones. We consider this situation in \cite{AM5}.
\end{remark}

\begin{remark}
\label{rmk:hypotheses}
The hypothesis \ref{cond:F2} in the previous theorem dropped the assumption that the symplectic form on $W$ is aspherical in Theorem \ref{thm:GGM}. This is possible due to the use of Novikov fields and an action filtration introduced by McLean and Ritter \cite{MR}. Furthermore, the hypothesis \ref{cond:NF2} in the previous theorem is weaker than the assumption \ref{cond:NF1} in Theorem \ref{thm:GGM}: we allow $c_1(\xi)$ to be torsion. See Sections \ref{sec:ESH} and \ref{sec:multiplicity}.
\end{remark}

\begin{remark}
\label{rmk:hyp_cB_NF1}
The assumption $c_B>1$ in \ref{cond:NF2} is used to compute $\HC^0_*(M)$ when we do not have a filling in Proposition \ref{prop:CH}. More precisely, it is used to obtain an index-admissible contact form given by a small perturbation of the connection form; cf. Remark \ref{rmk:hyp_cB_NF2}.
\end{remark}

\begin{remark}
An example of a prequantization satisfying the conditions of Theorem \ref{thm:main} under the assumption \ref{cond:F2} such that the symplectic form of the filling is not aspherical is given by the prequantization $M$ of $S^2 \times S^2 \times S^2$ endowed with the product symplectic form, where the symplectic form on $S^2$ has area 1. One can check that $M$ is simply connected and has a filling $W$, given by a $4$-ball bundle over $S^2 \times S^2$, with vanishing first Chern class. However, the symplectic form on $W$ is not aspherical since the zero section $S^2 \times S^2$ is a symplectic submanifold. For more examples, see Remark \ref{rmk:new examples}.
\end{remark}

\begin{remark}
\label{rmk:c_1 monotone}
We have that $c_1(\xi)|_{H_2(M,\Q)}=0$ if and only if $B$ is monotone, that is,  $c_1(TB)=\lambda[\omega]$ in $\H^2(B;\Q)$ for some $\lambda \in \R$.
\end{remark}

As in Theorem \ref{thm:GGM}, the hypothesis \ref{cond:F2} means that $M$ admits a ``nice'' symplectic filling and the assumption \ref{cond:NF2} does not require the existence of a filling, with the expense that the contact form has to be index-admissible.

We have the following immediate consequences.

\begin{corollary}
Let $(M^{2n+1},\xi)$ be a prequantization $S^1$-bundle of a closed symplectic manifold $(B,\om)$ and $\alpha$ a non-degenerate lacunary contact form supporting $\xi$ as in Theorem \ref{thm:main}. Assume that $\pi_1(M)$ is finite. Then $\alpha$ has precisely $r_B$ simple closed orbits, where $r_B=\dim \H_*(B;\Q)$.
\end{corollary}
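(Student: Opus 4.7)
The plan is to reduce the corollary to Theorem \ref{thm:main} by constructing a bijection between the set of simple closed orbits of $\alpha$ and the set of simple contractible closed orbits; once this bijection is in place, the count $r_B$ transfers immediately.

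First, apply Theorem \ref{thm:main} to $(M,\xi,\alpha)$, whose hypotheses coincide with those of the corollary, to obtain exactly $r_B$ simple contractible closed orbits. Since $\pi_1(M)$ is finite, every conjugacy class in $\pi_1(M)$ has finite order, so for each simple closed orbit $\gamma_0$ there is a smallest positive integer $d(\gamma_0)$ such that $\gamma_0^{d(\gamma_0)}$ is contractible. I would define
\[
\Phi(\gamma_0):=\gamma_0^{d(\gamma_0)}.
\]

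Next, I would check that $\Phi$ lands in the set of simple contractible orbits and is a bijection. If $\Phi(\gamma_0)$ were a nontrivial iterate of some other contractible orbit $\beta$, then $\beta$ would itself be an iterate $\gamma_0^j$ with $j<d(\gamma_0)$ and $\gamma_0^j$ contractible, contradicting minimality of $d(\gamma_0)$. Injectivity is immediate, since $\Phi(\gamma_0)$ determines its underlying simple orbit. For surjectivity, write any simple contractible orbit as $\gamma=\gamma_0^k$ with $\gamma_0$ simple; then $d(\gamma_0)\mid k$, and the requirement that $\gamma$ not be a nontrivial iterate of another contractible orbit forces $k=d(\gamma_0)$, giving $\gamma=\Phi(\gamma_0)$.

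The only point that needs careful handling---rather than a serious obstacle---is the distinction between \emph{simple} and \emph{simple contractible}: a simple contractible orbit can legitimately be a nontrivial iterate of a simple non-contractible orbit, and the hypothesis that $\pi_1(M)$ is finite is precisely what guarantees that every simple closed orbit contributes exactly one simple contractible iterate via $\Phi$. Combining the bijection with the count from Theorem \ref{thm:main} yields $\#\{\text{simple closed orbits of }\alpha\}=r_B$, as desired.
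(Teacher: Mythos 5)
Your proof is correct and takes essentially the same approach as the paper, which simply asserts in one sentence that when $\pi_1(M)$ is finite the set of simple contractible closed orbits is in bijection with the set of simple closed orbits. You have merely written out explicitly the bijection $\gamma_0 \mapsto \gamma_0^{d(\gamma_0)}$ that the paper leaves implicit.
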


Indeed, when $\pi_1(M)$ is finite, the set of simple contractible closed orbits is in bijection with the set of simple closed orbits.

\begin{remark}
One can define a Hamiltonian diffeomorphism of a symplectic manifold $B$ as a \emph{pseudo-rotation} if it has a finite and minimal number of periodic points, although the notion of the minimal number is ambiguous in general \cite{GG18}. Based on our previous discussion, one is tempted to define a \emph{Reeb pseudo-rotation} on a prequantization $M$ as above as a Reeb flow with precisely $r_B$ simple closed orbits. (Another definition, given in \cite{CGGM}, is that the Reeb flow has finitely many simple closed orbits. By our previous discussion, these definitions should be equivalent although it is far from being known.) In this way, the previous corollary says that the Reeb flow of a non-degenerate lacunary contact form on $M$ is a Reeb pseudo-rotation. A natural question is the converse of this result, that is, if a Reeb pseudo-rotation must be the Reeb flow of a non-degenerate lacunary contact form; see Section \ref{sec:questions}. It is true when $M=S^3$ as proved in \cite{CGHHL1}.
\end{remark}

\begin{corollary}
Let $(M^{2n+1},\xi)$ be either the standard contact sphere $S^{2n+1}$ or the unit cosphere bundle $S^*N$ of a CROSS and let $\alpha$ be a non-degenerate lacunary contact form supporting $\xi$. Then $\alpha$ has precisely $r_B$ simple closed orbits, where $r_B$ is given by Table \ref{tab:prequantizations}.
\end{corollary}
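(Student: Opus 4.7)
The plan is to apply Theorem \ref{thm:main} directly, then to pass from simple contractible closed orbits to simple closed orbits via a short bookkeeping step.

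First, I would verify the intrinsic hypotheses on the base $B$. For $M=S^{2n+1}$ one has $B=\CP^n$, and for $M=S^*N$ with $N$ a CROSS the base $B$ is the reduction described in the paragraph preceding Table \ref{tab:prequantizations}. In each case $B$ is spherically positive monotone with $[\om]|_{\pi_2(B)}\neq 0$; the vanishing $\H_k(B;\Q)=0$ for odd $k$ is immediate for $\CP^n$ and is recorded in \cite[p.~141]{Zil} for the CROSS cases; and the inequality $c_B>n/2$ (with $\dim M=2n+1$) can be read off from Table \ref{tab:prequantizations}. Next, to verify the filling condition \ref{cond:F2}, I would take $W=D^{2n+2}\subset\R^{2n+2}$ for $M=S^{2n+1}$, which is simply connected and has vanishing first Chern class, so \ref{cond:F2} holds trivially. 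For $S^*N$ with $\dim N\geq 3$, I would take $W=D^*N\subset T^*N$: the class $c_1(T(T^*N))|_{\pi_2(W)}$ vanishes because $T(T^*N)$ pulls back from $TN\oplus T^*N$ on $N$, and the inclusion $S^*N\hookrightarrow D^*N$ induces an injection on $\pi_1$ since the sphere fiber is simply connected in this dimension range.

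The low-dimensional CROSS cases $N=S^2$ and $N=\RP^2$ are the main obstacle, because the $\pi_1$-injectivity of the filling $D^*N$ fails there. In these cases $M$ is $\RP^3$ or a finite quotient of it, and I would treat them separately by combining the classical lower bound of two simple closed orbits mentioned in the paragraph preceding Corollary \ref{cor:sphere&cross} with the three-dimensional multiplicity results of \cite{CGH,CGHHL2}, which already settle the counting of simple closed orbits on $S^3$ and propagate to its finite quotients.

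Finally, Theorem \ref{thm:main} yields exactly $r_B$ simple contractible closed orbits. To upgrade to the count of simple closed orbits, I would observe that $\pi_1(M)$ is finite in every row of Table \ref{tab:prequantizations} (it is either trivial or a finite cyclic group of order two). For such $M$ the assignment $\beta\mapsto\beta^{\mathrm{ord}([\beta])}$, sending a simple closed orbit to its minimal contractible iterate, is a bijection between simple closed orbits and simple contractible closed orbits, exactly as stated in the remark following Theorem \ref{thm:main}; the two counts therefore agree and equal $r_B$, as claimed.
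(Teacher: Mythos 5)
Your reduction from simple closed orbits to simple contractible ones via the finiteness of $\pi_1(M)$, and your verification of the hypotheses of Theorem \ref{thm:main} for $S^{2n+1}$ (via $W=D^{2n+2}$) and for $S^*N$ with $\dim N>2$ (via $W=D^*N$), both match what the paper does. For $S^{2n+1}$ your route (apply Theorem \ref{thm:main} directly) is in fact slightly cleaner than the paper's; the paper gets the lower bound from Theorem \ref{thm:GGM} and the upper bound $\leq n+1$ from \cite[Theorem 1.5]{Gu}, while you get both at once from Theorem \ref{thm:main}.

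The genuine gap is in the surface cases $N\in\{S^2,\RP^2\}$. You cite \cite{CGH} for the lower bound of two and then say \cite{CGHHL2} ``settles the counting,'' but \cite{CGHHL2} is the two-or-infinity theorem: it says that a Reeb flow on a closed $3$-manifold with torsion $c_1(\xi)$ has either two or infinitely many simple closed orbits. To conclude ``exactly two'' you must first rule out the infinite alternative, i.e., you need an upper bound, and neither \cite{CGH} nor \cite{CGHHL2} supplies one. The paper fills exactly this hole using \cite[Theorem 1.5]{Gu}: G\"urel's theorem bounds from above the number of simple closed orbits of a non-degenerate lacunary contact form by the quantity $b$ built from the (bounded) positive equivariant symplectic homology, and for $S^*S^2$ and $S^*\RP^2$ this bound equals two. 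Without invoking G\"urel's theorem (or the combinatorial argument behind Claim~1 of the proof of Theorem \ref{thm:main}), your argument in these two rows of Table \ref{tab:prequantizations} does not close. A small additional inaccuracy: $\pi_1(S^*\RP^2)$ is cyclic of order four, not two. This does not affect the bijection argument, which only needs $\pi_1(M)$ finite, but the parenthetical claim as stated is wrong.
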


The last corollary improves results due to Duan-Liu-Ren \cite{DLR} and Duan-Xie \cite{DX}. In the case of the standard contact sphere $S^{2n+1}$, it follows easily from Theorem \ref{thm:GGM} and \cite[Theorem 1.5]{Gu}. (Actually, we do not need to use Theorem \ref{thm:GGM}: we just have to note that a non-degenerate lacunary contact form on $S^{2n+1}$ is dynamically convex (i.e. every closed orbit has index at least $n+2$) and use the fact that a non-degenerate dynamically convex contact form on $S^{2n+1}$ has at least $n+1$ simple closed orbits \cite{AM2,GKa}. On the other hand, by  \cite[Theorem 1.5]{Gu} the number of simple periodic orbits of a non-degenerate lacunary contact form on $S^{2n+1}$ is at most $n+1$.) As mentioned before, the unit disk bundle $D^*N$ meets the assumption \ref{cond:F2} of Theorem \ref{thm:main} whenever the dimension of $N$ is bigger than two. If $N$ is a surface then it is given by either $S^2$ or $\RP^2$ and in this case the result follows easily from Theorem \ref{thm:GGM} and \cite[Theorem 1.5]{Gu}.

\begin{remark}[More examples]
\label{rmk:new examples}
As explained before, we proved originally Theorem \ref{thm:main} under the assumption that $c_B>n/2$ which is satisfied by the examples of the previous corollary. Without this hypothesis, we have \emph{plenty of other examples} of prequantizations satisfying the assumptions of Theorem \ref{thm:main}. As a matter of fact, let us consider first examples meeting the assumption \ref{cond:F2}. We have, for instance, that  \emph{every} closed toric symplectic manifold $(B,\om)$ satisfies $\om|_{\pi_2(B)}\neq 0$ and $\H_{k}(B;\Q)=0$ for every odd $k$. We claim that if $B$ is monotone and has dimension four then it admits a number $m \in \N$ such that $[\om/m] \in \H^2(B,\Z)$ and the prequantization $(M,\xi)$ of $(B,\om/m)$ has a filling as in hypothesis \ref{cond:F2}. Indeed, take $m \in \N$ such that $[\om/m] \in \H^2(B,\Z)$ and $[\om/m]$ is a primitive class, that is, there is no $c \in \H^2(B,\Z)$ such that $[\om/m]=lc$ for some integer $l\geq 2$. This last condition is equivalent to the existence of an element $S \in \pi_2(B) \cong H_2(B,\Z)$ such that $\lg [\om/m],S \rg=1$. Then one can easily check, using the long exact homotopy sequence of the $S^1$-bundle $M \to B$, that $M$ is simply connected (note that $B$ is simply connected because it is toric). Moreover, since $B$ is monotone, we have that $c_1(\xi)$ is torsion; cf. Remark \ref{rmk:c_1 monotone}. But, since $M$ is simply connected, we conclude from the universal coefficient theorem for cohomology that $\H^2(M,\Z)$ is torsion-free and hence $c_1(\xi)=0$. Thus, $(M,\xi)$ is a simply connected toric contact manifold with $c_1(\xi)=0$. It can be proved that when $M$ has dimension five then it admits a (toric) symplectic filling $W$ such that $c_1(TW)=0$; see \cite[Section 5]{AM3}. (Notice that the symplectic form on $W$ is not aspherical in general.) Hence, $W$ satisfies hypothesis \ref{cond:F2}. Finally, note that if $B$ is monotone then it is positive monotone because it is toric. We can easily construct several examples in higher dimensions; cf. \cite[Section 5]{AM3}. Examples satisfying \ref{cond:NF2} are given by any prequantization of a closed monotone toric symplectic manifold with minimal Chern number bigger than one (e.g. monotone products of complex projective spaces).
\end{remark}

A consequence of the \emph{proof} of Theorem \ref{thm:main} is the following result which provides more examples where our main result holds. Note that if the basis $B$ admits a Hamiltonian action of the torus $T^d$ then we have a lifted contact $T^{d+1}$-action on $M$; see Section \ref{sec:proof main orbifolds}. Given the action of a Lie group $G$ on a manifold $M$ and a vector field $X$ on $M$ invariant under this action, we say that a periodic orbit $\ga$ of $X$ is \emph{symmetric} if $g(\text{Im}(\ga))=\text{Im}(\ga)$ for every $g \in G$.

\begin{theorem}
\label{thm:main orbifolds}
Let $(M^{2n+1},\xi)$ be a prequantization $S^1$-bundle of a closed symplectic manifold $(B,\om)$ such that $\om|_{\pi_2(B)}\neq 0$, $c_B>1$ and $\H_{k}(B;\Q)=0$ for every odd $k$. Assume that $B$ is spherically positive monotone, that is,  $c_1(TB)|_{\pi_2(B)}=\lambda\omega|_{\pi_2(B)}$ for some $\lambda>0$ and that $M$ satisfies the hypothesis \ref{cond:F2} of Theorem \ref{thm:main}. Suppose that $B$ admits a Hamiltonian $T^d$-action with isolated fixed points and consider the corresponding lifted $T^{d+1}$-action on $M$. Let $G$ be a finite subgroup of $T^{d+1}$ acting freely on $M$ and let $\M=M/G$ be the quotient. Then every non-degenerate lacunary contact form $\alpha$ on $\M$ has precisely $r_B$ simple contractible closed orbits, where $r_B=\dim \H_*(B;\Q)$. Moreover, the lifts of these orbits to $M$ are symmetric closed orbits of the lifted contact form $\halpha$ on $M$ with respect to the $G$-action.
\end{theorem}

\begin{remark}
One can show that $\dim \H_*(\M/S^1;\Q)=r_B$, where the $S^1$-action on $\M$ is the one induced by the obvious circle action on $M$, whose orbits are the fibers, which commutes with the $G$-action.
\end{remark}

\begin{remark}
\label{rmk:hyp_cB_NF2}
The hypothesis that $c_B>1$ is used only to ensure that $\alpha$ is index-admissible and therefore we can argue as in Theorem \ref{thm:main} under the assumption \ref{cond:NF2}; cf. Remark \ref{rmk:hyp_cB_NF1}. This hypothesis can be dropped once $\M$ has a filling satisfying \ref{cond:F2}.
\end{remark}

The main point in the previous theorem is that $\M$ is not, in general, a prequantization of a symplectic manifold: it is the prequantization of a symplectic \emph{orbifold}. (Indeed, the $S^1$-action on $\M$ is not free in general: it is only locally free.) For instance, symplectic manifolds meeting the assumptions of the previous theorem are monotone toric closed symplectic manifolds $B$ such that $c_B>1$. Take the particular case where $B=\CP^n$ with the standard symplectic form $\om$ normalized so that $\om$ evaluated at a generator of $\H_2(B,\Z) \cong \Z$ is $\pm 1$. Then $M$ is the standard contact sphere $S^{2n+1}$ which of course has a filling $W$ satisfying the hypotheses of the theorem. Clearly, $\CP^n$ has a Hamiltonian $T^n$-action with isolated fixed points. The lifted $T^{n+1}$-action on $S^{2n+1}$, regarded as a subset of $\C^{n+1}$,  is given by
\begin{equation}
\label{eq:torus action}
(\theta_0,\dots,\theta_n) \cdot (z_0,\dots,z_n) \mapsto (e^{i\theta_0}z_0,\dots,e^{i\theta_n}z_n).
\end{equation}
Given an integer $p\geq 1$, consider the $\Z_p$-action on $S^{2n+1}$ generated by the map
\begin{equation}
\label{eq:Z_p action}
\psi(z_0, \dots, z_n) = \left(e^{\frac{2\pi i \ell_0}{p}}z_0, e^{\frac{2\pi i \ell_1}{p}}z_1, \dots, e^{\frac{2\pi i \ell_n}{p}}z_n \right),
\end{equation}
where $\ell_0, \ldots, \ell_n$ are integers called the weights of the action. Such an action is free when the weights are coprime with $p$ (that we will assume from now on) and in that case we have a lens space obtained as the quotient of $S^{2n+1}$ by the action of $\Z_p$. We denote this lens space by $\L$. In general, $\L$ is not a prequantization of a symplectic manifold: it is a prequantization of a \emph{weighted} complex projective space.

\begin{corollary}
Every non-degenerate lacunary contact form on a lens space $\L$ has precisely $n+1$ closed orbits. Moreover, the lifts of the corresponding contractible closed orbits are symmetric closed orbits on $S^{2n+1}$.
\end{corollary}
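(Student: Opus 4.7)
The plan is to apply Theorem \ref{thm:main2} to the setup $B=\CP^n$, $M=S^{2n+1}$, $G=\Z_p$, with $\M=\L$. Everything in the statement of the corollary has been arranged so that this reduces to verifying hypotheses, so the proof should be essentially a verification exercise rather than a new construction. I do not expect a serious obstacle; the main point is to check each condition one by one.

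First I would recall the ingredients. With $\omega$ the Fubini-Study form on $\CP^n$ normalized so that it evaluates to $\pm 1$ on a generator of $\H_2(\CP^n;\Z)$, the pair $(\CP^n,\omega)$ is closed, monotone, and satisfies $\omega|_{\pi_2}\neq 0$; its homology is concentrated in even degrees, with $r_B=\dim \H_*(\CP^n;\Q)=n+1$, and its minimal Chern number is $c_B=n+1>n/2$. The associated prequantization $S^1$-bundle is $M=S^{2n+1}$ with its standard contact structure, which bounds the unit ball $W=B^{2n+2}\subset \C^{n+1}$. The filling $W$ is simply connected with $c_1(TW)=0$ (in fact $TW$ is trivial), so $\pi_1(M)\to \pi_1(W)$ is injective (both groups are trivial) and hypothesis \ref{cond:F2} of Theorem \ref{thm:main} is satisfied. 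Thus all the prequantization hypotheses of Theorem \ref{thm:main2} hold for $(M,\xi)=(S^{2n+1},\xi_{\text{std}})$ over $B=\CP^n$.

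Next I would address the torus action. The standard diagonal $T^n$-action on $\CP^n$ is Hamiltonian with $n+1$ isolated fixed points (the coordinate points), and its lift to $S^{2n+1}\subset \C^{n+1}$ is the full $T^{n+1}$-action \eqref{eq:torus action} that commutes with the Reeb $S^1$-flow. The subgroup $\Z_p$ generated by \eqref{eq:Z_p action} is the image of $(e^{2\pi i\ell_0/p},\dots,e^{2\pi i\ell_n/p})\in T^{n+1}$, hence a finite subgroup of $T^{n+1}$, and it acts freely on $S^{2n+1}$ precisely because each $\ell_j$ is coprime to $p$ (so no nontrivial power of the generator can fix any coordinate hyperplane intersected with $S^{2n+1}$). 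The quotient is by definition $\L$. This places us exactly in the setting of Theorem \ref{thm:main2}.

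Finally, given any non-degenerate lacunary contact form $\alpha$ on $\L$, Theorem \ref{thm:main2} directly yields $r_B=n+1$ simple contractible closed orbits of $\alpha$, together with the symmetry of their lifts to $S^{2n+1}$ under the $\Z_p$-action. Since $\pi_1(\L)=\Z_p$ is finite, every simple closed orbit has some iterate which is contractible; in fact one easily checks that the map sending a simple contractible closed orbit on $\L$ to the simple closed orbit it covers gives a bijection between simple contractible and simple closed orbits on $\L$ (the underlying simple orbit is unique, and conversely every simple closed orbit has a minimal contractible iterate which is itself simple as a contractible orbit). Thus the number of simple closed orbits of $\alpha$ on $\L$ is exactly $n+1$, completing the proof.
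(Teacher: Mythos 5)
Your proof is correct and follows essentially the same route as the paper: you verify the hypotheses of Theorem \ref{thm:main2} for $B=\CP^n$, $M=S^{2n+1}$, $G=\Z_p$, $\M=\L$ exactly as in the discussion preceding the corollary, and then pass from ``simple contractible closed orbits'' to ``simple closed orbits'' using the finiteness of $\pi_1(\L)$, which is the same bijection the paper invokes in its earlier corollary for prequantizations with finite fundamental group.
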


Another application of Theorem \ref{thm:main orbifolds} is to unit cosphere bundles of lens spaces, which are also, in general, just prequantizations of orbifolds. Consider the CROSS given by the sphere $S^m$ with the round metric. As mentioned before, the geodesic flow generates a free circle action on $S^*S^m$. Let $B$ be the symplectic manifold given by the quotient $S^*S^m/S^1$. $B$ is the Grassmannian of oriented two-planes $G^+_2(\R^{m+1})$. The linear action of $\SO(m+1)$ on $\R^{m+1}$ naturally induces an action of $\SO(m+1)$ on $B$ which is Hamiltonian. The group $\SO(m+1)$ also induces an isometric action on $S^m$ and it turns out that the action on $B$ is the one induced by the lifted action to $S^*S^m$ (note that the action of $\SO(m+1)$ on $S^*S^m$ sends geodesics to geodesics).

This group has a maximal torus $T$ of dimension $\lfloor \frac{m+1}{2} \rfloor$. Suppose now that $m=2d-1$ is odd so that the dimension of $T$ is $d$. The corresponding action of $T^d$ on $S^m \subset \C^{d}$ is given by rotations in each coordinate as in \eqref{eq:torus action}. The lifted action of $T^{d}$ to $S^*S^m$ coincides with the lift of the Hamiltonian $T^{d}$-action on $B$ to $S^*S^m$ which commutes with the $S^1$-action induced by the geodesic flow, generating the aforementioned contact action of $T^{d+1}$ on $S^*S^m$. Given an integer $p\geq 1$, consider the $\Z_p$-action on $S^{m}$ generated by the map $\psi$ as in \eqref{eq:Z_p action} with the weights $\ell_0, \ldots, \ell_{d-1}$ coprime with $p$. The quotient $S^m/\Z_p$ is the lens space $\Lm$ and the quotient $S^*S^m/\Z_p$ with respect to the lifted action is the unit cosphere bundle of this lens space. Therefore, we conclude the following corollary.

\begin{corollary}
\label{cor:sphere bundle}
Every non-degenerate lacunary contact form on the unit cosphere bundle of a lens space $\Lm$ has precisely $m+1$ closed orbits. Moreover, the lifts of the corresponding contractible closed orbits are symmetric closed orbits on $S^*S^{m}$.
\end{corollary}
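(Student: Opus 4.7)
The plan is to recognize Corollary \ref{cor:sphere bundle} as a direct application of Theorem \ref{thm:main2} with $M=S^*S^{m}$, $B=G^{+}_2(\R^{m+1})$, $G=\Z_p$ acting via the lift of $\psi$ in \eqref{eq:Z_p action}, and $\M=S^*(\Lm)$. First, I would verify the hypotheses of Theorem \ref{thm:main2}. That $M$ is a prequantization of $B$, that $\H_k(B;\Q)=0$ for odd $k$, and the values $r_B=m+1$, $c_B=m-1$ (for $m=2d-1$ odd) are already recorded in Table \ref{tab:prequantizations} and the discussion preceding it. The Grassmannian $G_2^+(\R^{m+1})$ is isomorphic to the complex quadric in $\CP^{m}$, hence a monotone K\"ahler manifold with $\om|_{\pi_2(B)}\neq 0$ and $c_1(TB)|_{\pi_2(B)}=\lambda\om|_{\pi_2(B)}$ for some $\lambda>0$. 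Since $\dim S^*S^m=2m-1$ we have $n=m-1$, so $c_B=m-1>n/2$ whenever $m\geq 3$.

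For the filling condition \ref{cond:F2} I take $W=D^*S^m$, the unit codisk bundle, a Liouville (hence strong) filling of $S^*S^m$. Since $W$ deformation retracts onto the zero section $S^m$, along which $TW\iso TS^m\oplus T^*S^m$, its first Chern class vanishes, giving $c_1(TW)|_{\pi_2(W)}=0$. For odd $m\geq 3$, both $S^m$ and $S^*S^m$ are simply connected, so the inclusion induces a (trivially) injective map on $\pi_1$. Next, the maximal torus $T^d\subset\SO(2d)=\SO(m+1)$ acts on $\R^{m+1}\iso\C^d$ by rotating each complex coordinate, inducing the action \eqref{eq:torus action} on $S^m$ and a Hamiltonian action on $B=S^*S^m/S^1$ whose fixed points correspond to the coordinate complex lines in $\C^d$ and are therefore isolated. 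Its lift to $S^*S^m$ together with the geodesic $S^1$-action produces the contact $T^{d+1}$-action, inside which the subgroup $\Z_p$ generated by $\psi$ sits; the assumption that the weights $(\ell_0,\dots,\ell_{d-1})$ are coprime with $p$ ensures this $\Z_p$-action is free, so $\M$ is smooth.

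Applying Theorem \ref{thm:main2} then yields precisely $r_B=m+1$ simple contractible closed orbits of $\alpha$, whose lifts to $S^*S^m$ are symmetric under the $\Z_p$-action. To upgrade from ``simple contractible'' to ``simple'' (the count appearing in the statement), I would use that $\pi_1(\M)=\Z_p$ is finite, since $S^*S^m$ is simply connected for $m\geq 3$, together with the bijection (noted earlier after the corollary on finite $\pi_1$) that sends a simple closed orbit $\ga$ to its minimal contractible iterate $\ga^k$, where $k$ is the order of $[\ga]\in\pi_1(\M)$. I do not foresee any serious obstacle: the bulk of the work is inside Theorem \ref{thm:main2}, and the remaining verifications (monotonicity of the quadric, triviality of $c_1(TW)$ on $\pi_2(W)$, and freeness of the $\Z_p$-action) are essentially standard.
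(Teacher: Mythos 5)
Your proposal is correct and follows essentially the same route the paper takes: Corollary \ref{cor:sphere bundle} is stated as an immediate consequence of Theorem \ref{thm:main2} applied to $M=S^*S^m$, $B=G^+_2(\R^{m+1})$, $G=\Z_p$, with the hypothesis checks (prequantization structure, Table \ref{tab:prequantizations}, the filling $D^*S^m$, monotonicity, the maximal torus of $\SO(m+1)$, and freeness of the $\Z_p$-action) being exactly the verifications the paper carries out or takes as standard in the paragraph preceding the corollary, and the upgrade from simple contractible orbits to all simple orbits being the same finite-$\pi_1$ observation the paper records after its Corollary on finite fundamental group.
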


\begin{remark}
It follows from the previous discussion and the proof of Theorem \ref{thm:main orbifolds}, presented in Section \ref{sec:proof main orbifolds}, that a lens space admits a Finsler metric with finitely many simple closed orbits (more precisely, $m+1$ closed orbits if the lens space has dimension $m$); see Remark \ref{rmk:finsler}. The point here is that the $\Z_p$-action on $S^*S^m$ is the lift of an action on $S^m$ which is isometric with respect to the Katok-Ziller metric. Thus, a CROSS is not the only example of a closed manifold admitting a Finsler metric with finitely many simple closed orbits; cf. \cite[Page 140]{Zil}.
\end{remark}

\begin{remark}
Using the previous discussion and the proof of Theorem \ref{thm:main orbifolds} one can obtain, inspecting the proof of Theorem \ref{thm:GGM}, multiplicity results for lens spaces and unit cosphere bundles of lens spaces, extending Corollary \ref{cor:sphere&cross}. See Theorems \ref{thm:lens} and \ref{thm:S^*lens} in Section \ref{sec:multiplicity}.
\end{remark}

\begin{remark}
Similarly to Theorem \ref{thm:main}, we originally proved Theorem \ref{thm:main orbifolds} under the assumption that $c_B>n/2$. As in Remark \ref{rmk:new examples}, under the weaker hypothesis that $c_B>1$ we can obtain several new examples where we can apply Theorem \ref{thm:main orbifolds}.
\end{remark}

Theorem \ref{thm:main orbifolds} raises the question if Theorem \ref{thm:main} can be generalized to prequantizations of orbifolds. In \cite{AM5}, we obtain this generalization for several such prequantizations, including a large class of toric contact manifolds. We also prove multiplicity results like Theorem \ref{thm:GGM} for these prequantizations. In particular, we drop the assumption on the minimal Chern number in Theorem \ref{thm:GGM}.

\subsection{Organization of the paper}

The rest of this paper is organized as follows. Sections \ref{sec:ESH} and \ref{sec:IRT} furnish the necessary background for the proof of Theorem \ref{thm:main}. More precisely, in Section \ref{sec:esh} we briefly review several facts about positive equivariant symplectic homology. This homology is computed for suitable prequantization $S^1$-bundles in Section \ref{sec:esh_preq}. The resonance relations for equivariant symplectic homology are presented in Section \ref{sec:resonance}. Section \ref{sec:IRT} states the index recurrence theorem, which is a key tool in the proof of Theorem \ref{thm:main}. The proof of Theorem \ref{thm:main} is established in Section \ref{sec:proof main} under the assumption that $c_B>n/2$. This hypothesis is dropped in Appendix \ref{appendix:c_B}. The proof of Theorem \ref{thm:main orbifolds} is presented in Section \ref{sec:proof main orbifolds}. Section \ref{sec:multiplicity} establishes results about the multiplicity of periodic Reeb orbits on lens spaces and their unit cosphere bundles, extending some results from \cite{GGM2}. Finally, Section \ref{sec:questions} poses some questions concerning contact forms with finitely many closed orbits.

\subsection{Acknowledgments}

We are grateful to Viktor Ginzburg for useful comments on a preliminary version of this work.

\section{Equivariant symplectic homology}
\label{sec:ESH}

\subsection{Equivariant symplectic homology}
\label{sec:esh}
In this section we briefly recall several facts about positive equivariant symplectic homology, treating the subject from a slightly unconventional perspective, following \cite{GGM2}.

Let first $(M,\xi)$ be a closed contact manifold and $(W,\Omega)$ be a strong symplectic filling of $M$ with $c_1(TW)|_{\pi_2(W)}=0$. Usually, we also ask that $\Om|_{\pi_2(W)}=0$ but this condition can be dropped using the universal Novikov field 
\[
\Nov = \bigg\{\sum_{i=1}^\infty n_iT^{a_i}\ ;\, a_i \in \R,\, a_i\to\infty,\, n_i\in \Q\bigg\},
\]
and an action filtration introduced by McLean and Ritter \cite{MR}; cf. \cite[Section 2]{AGKM}. Let $\alpha$ be a non-degenerate contact form on $M$ supporting the contact structure $\xi$. Recall that a periodic orbit $\ga$ of $\alpha$ is \emph{good} if its index has the same parity of the index of the underlying simple closed orbit. Then the positive equivariant symplectic homology $\SH_*^{S^1,+}(W)$ with coefficients in $\Nov$ is the homology of a complex $\CC_*(\alpha)$ generated by the good closed Reeb orbits of $\alpha$; see \cite[Proposition 3.3]{GG}. (More precisely, \cite[Proposition 3.3]{GG} is proved assuming that $\Om|_{\pi_2(W)}=0$ and using $\Q$-coefficients but its proof can be readily adapted to our context since it is purely algebraic; cf. \cite[Section 2]{AGKM}.) This complex is graded by the Conley--Zehnder index and filtered by the action. Furthermore, once we fix a free homotopy class of loops in $W$, the part of $\CC_*(\alpha)$ generated by closed Reeb orbits in that class is a subcomplex. As a consequence, the entire complex $\CC_*(\alpha)$ breaks down into a direct sum of such subcomplexes indexed by free homotopy classes of loops in $W$.

The differential in the complex $\CC_*(\alpha)$, but not its homology, depends on several auxiliary choices, and the nature of the differential is not essential for our purposes. The complex $\CC_*(\alpha)$ is functorial in $\alpha$ in the sense that a symplectic cobordism equipped with a suitable extra structure gives rise to a map of complexes. For the sake of brevity and to emphasize the obvious analogy with contact homology, we denote the homology of $\CC_*(\alpha)$ by $\HC_*(M)$ rather than $\SH_*^{S^1,+}(W)$. The homology of the subcomplex $\CC^0_*(\alpha)$ formed by the orbits contractible in $W$ will be denoted by $\HC_*^0(M)$. However, it is worth keeping in mind that $\CC_*(\alpha)$ and hypothetically even the homology may depend on the choice of the filling $W$.

This description of the positive equivariant symplectic homology as the homology of $\CC_*(\alpha)$ is not quite standard, but it is most suitable for our purposes. (We refer the reader to \cite{GG} for more details and further references and to \cite{BO09,BO10, BO13a, BO13b, BO17,Vit} for the original construction of the equivariant symplectic homology.) To see why $\HC_*(M):=\SH_*^{S^1,+}(W)$ can be obtained as the homology of a single complex generated by good closed Reeb orbits, let us first consider an admissible Hamiltonian $H$ on the symplectic completion of $W$ and focus on the orbits of $H$ with positive action. Such orbits are in a one-to-one correspondence with closed Reeb orbits $\gamma$ with action below a certain threshold $T$ depending on the slope of $H$. The $S^1$-equivariant Floer homology of $H$ is the homology of a Floer-type complex obtained from a non-degenerate parametrized perturbation of $H$; \cite{BO13b,Vit}. This complex is filtered by the action. (Here we are using the action filtration introduced by McLean-Ritter \cite{MR}.) The $E^1$-term of the resulting spectral sequence (over $\Nov$) is generated by the good Reeb orbits of $\alpha$ with action below $T$. Now we can (canonically, once the generators are fixed) reassemble the differentials $\p_r$ into a single differential $\p$ on $\CC_*(H):=E^1_{*,*}$ in such a way the the homology of the resulting complex is $E^\infty=\HF_*^{S^1,+}(H)$. Roughly speaking, $\p=\p_1+\p_2+\ldots$, where $\p_r$ is suitably ``extended'' from $E^r$ to $E^1$. Moreover, this procedure respects the action filtration and is functorial with respect to continuation maps. Passing to the limit in $H$, we obtain the complex $\CC_*(\alpha)$ as the limit of the complexes $\CC_*(H)$; see \cite[Sections 2.5 and 3]{GG} for further details.

A remarkable observation by Bourgeois and Oancea in \cite[Section 4.1.2]{BO17} is that under suitable additional assumptions on the indices of closed Reeb orbits the positive equivariant symplectic homology is defined even when $M$ does not have a symplectic filling. To be more precise, we assume that $c_1(\xi)|_{\pi_2(M)}=0$ and let $\alpha$ be a non-degenerate contact form on $M$ such that all of its closed contractible Reeb orbits have Conley--Zehnder index strictly greater than $3-n$. Furthermore, under this assumption the proof of \cite[Proposition 3.3]{GG} carries over essentially word-for-word, and hence again the positive equivariant symplectic homology of $M$ can be described as the homology of a complex $\CC_*(\alpha)$ generated by good closed Reeb orbits of $\alpha$, graded by the Conley--Zehnder index and filtered by the action. The complex breaks down into the direct sum of subcomplexes indexed by free homotopy classes of loops in $M$. As in the fillable case, we will use the notation $\HC_*(M)$ and $\HC_*^0(M)$. Note that in general, in spite of the notation, this homology has slightly different properties (and hypothetically could be different) from the homology defined via a filling. For instance, it has a decomposition by the free homotopy classes of loops in $M$, but not in $W$ as when $M$ is fillable. (Intuitively, one can think of the resulting homology as defined by using a non-compact filling of $M$ by the bounded part $M\times (0,\,1]$ of the symplectization of $(M,\xi)$.)

\subsection{Equivariant symplectic homology of prequantizations}
\label{sec:esh_preq}

The next proposition, essentially taken from \cite[Proposition 3.1]{GGM2}, shows how to compute the equivariant symplectic homology of a suitable prequantization in terms of the homology of the basis. This computation will be crucial throughout this work.

\begin{proposition}
\label{prop:CH}
Let $(M^{2n+1},\xi)$ be a prequantization of a closed symplectic manifold $(B,\om)$ with $\omega|_{\pi_2(B)} \neq 0$ and  such that $\H_{k}(B;\Q)=0$ for every odd $k$ or $c_B>n$. 
\begin{itemize}
\item[{\rm (a)}] Assume that $M$ satisfies the hypothesis \ref{cond:F2} of Theorem \ref{thm:main}. Then, $B$ is spherically monotone.  When $B$ is spherically positive monotone, the positive equivariant symplectic homology for contractible periodic orbits of $M$ is given by
\begin{equation}
\label{eq:CH}
\HC^0_\ast(M) \cong \bigoplus_{m\in\N} \H_{\ast -2mc_B+n} (B; \Nov). 
\end{equation}
When $B$ is spherically negative monotone, we have
\begin{equation}
\label{eq:CH-nm}
\HC^0_\ast(M) \cong \bigoplus_{m\in\N} \H_{\ast +2mc_B-n} (B; \Nov). 
\end{equation}
In particular, in both cases the homology is independent of the choice of the filling $W$ satisfying the hypothesis \ref{cond:F2} of Theorem \ref{thm:main}.

\item[{\rm (b)}] Alternatively, assume that $B$ is spherically positive monotone with $c_B\geq 2$ and, as in the hypothesis \ref{cond:NF2} of Theorem \ref{thm:main}, $c_1(\xi)|_{H_2(M,\Q)}=0$ and $\alpha$ is a non-degenerate contact form on $(M,\xi)$ such that all contractible closed Reeb orbits have index greater than $3-n$. Then \eqref{eq:CH} also holds.
\end{itemize}
\end{proposition}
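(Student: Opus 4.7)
The plan is to compute $\HC^0_*(M)$ by a Morse--Bott perturbation of a circle-invariant contact form on $M$, identifying the resulting complex with a direct sum of shifted Morse complexes of the base $B$, one per positive iterate of the fiber.

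For part (a), I first derive the spherical monotonicity of $B$ from hypothesis \ref{cond:F2}. The prequantization identification $\xi \cong \pi^* TB$ as symplectic vector bundles (where $\pi : M \to B$ is the bundle projection) yields $c_1(\xi) = \pi^* c_1(TB)$, and the filling assumption $c_1(TW)|_{\pi_2(W)}=0$ restricts to $c_1(\xi)|_{\pi_2(M)}=0$. From the homotopy long exact sequence of the fibration $S^1 \to M \to B$, the image of $\pi_* : \pi_2(M) \to \pi_2(B)$, tensored with $\Q$, is the kernel of the pairing with $[\om]$. Combined with the vanishing just noted, this forces $c_1(TB)|_{\pi_2(B)} = \lambda\,\om|_{\pi_2(B)}$ for some $\lambda \in \Q$, i.e.\ $B$ is spherically monotone.

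Next, take the connection contact form $\alpha_0$ on $M$ whose Reeb field generates the free $S^1$-action defining the bundle. Every closed Reeb orbit is a fiber or an iterate, and the $m$-th iterates form a Morse--Bott critical manifold with reduction $M/S^1 = B$. Perturb to $\alpha_\epsilon = (1 + \epsilon\,\pi^* f)\,\alpha_0$ for a Morse function $f$ on $B$. For a fixed action bound $T$ and sufficiently small $\epsilon>0$, $\alpha_\epsilon$ is non-degenerate below action $T$, and its closed Reeb orbits are in bijection with pairs $(p,m)$, $p \in \mathrm{Crit}(f)$, $m \in \N$. A standard capping-disc computation, using spherical monotonicity to relate $c_1(TB)$ to $[\om]$, gives
\begin{equation*}
\cz(\ga_{p,m}) \;=\; 2 m c_B + \mathrm{ind}_f(p) - n
\end{equation*}
in the positive monotone case (and the opposite sign on the iterate term in the negative case). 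Thus the generators of $\CC^0_*(\alpha_\epsilon)$ from iterate $m$ are, after the grading shift $2m c_B - n$, in bijection with the generators of the Morse complex of $f$ on $B$ with $\Nov$-coefficients.

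It remains to show that the differential on $\CC^0_*(\alpha_\epsilon)$ reduces, within each iterate family, to the Morse differential and has no cross-iterate components. Under $\H_k(B;\Q)=0$ for odd $k$, choose $f$ so that all Morse indices of its critical points are even; then every generator sits at a degree of parity $\equiv n \pmod 2$ globally, and the degree-$(-1)$ differential vanishes for parity reasons, so $\HC^0_*(M)$ is the direct sum of the generators. Alternatively, when $c_B > n$, the degree ranges $[\,2mc_B - n,\,2mc_B + n\,]$ for successive iterates are separated by $2c_B - 2n \geq 2$, so no differential can cross between iterate families, and the Morse differential within iterate $m$ contributes exactly $\H_*(B;\Nov)$ shifted by $2mc_B - n$. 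Summing over $m \in \N$ yields \eqref{eq:CH}, and the negative monotone case follows by reversing signs. Part (b) is proved by the same Morse--Bott calculation: index-admissibility $\cz > 3-n$ allows one to define $\HC^0_*(M)$ directly from the symplectization of $(M,\xi)$ as in Section \ref{sec:esh}, and the perturbation and index computation go through unchanged.

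The main obstacle is making all of this rigorous when $\om$ (and the filling $\Om$) is not aspherical: one must combine the McLean--Ritter action filtration with $\Nov$-coefficients so that the Morse--Bott spectral sequence for the perturbation is well defined, the $E^1$-page genuinely identifies iterate-by-iterate with the Morse complex of $B$, and the resulting homology is shown to be independent of the choice of filling via a standard cobordism argument.
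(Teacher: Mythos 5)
Your plan reconstructs the Morse--Bott perturbation argument that underlies the cited \cite[Proposition 3.1]{GGM2}; the paper's own proof of this proposition is essentially a citation to that result, adapted to $\Nov$-coefficients via the universal coefficient theorem, together with one technical addendum. So you are on the same track, but two points need attention.

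First, ``choose $f$ so that all Morse indices of its critical points are even'' is not justified: $\H_{\mathrm{odd}}(B;\Q)=0$ does not imply that $B$ admits a Morse function with only even-index critical points. The correct vanishing argument runs through the action (Morse--Bott) spectral sequence, whose $E^1$-page is already $\bigoplus_m\H_{*-2mc_B+n}(B;\Nov)$ and hence concentrated in a single parity, so the $E^1$- and higher differentials vanish for parity reasons (or, when $c_B>n$, by the degree gap between consecutive iterate families); your chain-level claim should be replaced by this homological one. Second, the one piece of the proof the paper writes out in detail is the modification needed in part (b) because there $c_1(\xi)$ is only assumed \emph{torsion}, whereas \cite[Lemma 3.3]{GGM2} --- which establishes index-admissibility of small perturbations of the connection form, the input needed to define $\HC^0_*$ without a filling --- assumes $c_1(\xi)=0$ so that $\Lambda^n_\C\xi$ is trivial. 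The paper instead trivializes $(\Lambda^n_\C\xi)^{\otimes N}$ for $N$ the order of $c_1(\xi)$, uses this to define a fractional Conley--Zehnder index and a mean index on arbitrary Reeb segments, and then checks that GGM2's continuity argument carries over verbatim. Your assertion that ``the perturbation and index computation go through unchanged'' in part (b) glosses over exactly this step, which is the only place where the paper's proof adds content beyond a citation.
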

In other words, \eqref{eq:CH} asserts that $\HC^0_\ast(M)$ is obtained by taking an infinite number of copies of $H_{\ast-n}(B;\Nov)$ with grading shifted up by positive integer multiples of $2c_B$ and adding up the resulting spaces. We emphasize that in Case (a) of the proposition $\HC^0_\ast(M)$ is the symplectic homology associated with the filling of $M$, while in Case (b) this is the ``non-fillable'' homology described above.

\begin{remark}
In this work, we do not need to consider the case that $B$ is spherically negative monotone, but we included it in Proposition \ref{prop:CH} for the sake of completeness.
\end{remark}

\begin{proof}
First note that, by the universal coefficient theorem,  $\H_*(B;\Nov) \cong \H_*(B;\Q) \otimes \Nov$ since $\Nov$ is a field. Then the proof goes word-for-word the proof of \cite[Proposition 3.1]{GGM2} using the Novikov field and the action filtration mentioned in the previous section, except in the proof of \cite[Lemma 3.3]{GGM2}, used in the proof of item (b), where it is assumed that $c_1(\xi)=0$ while here we are allowing $c_1(\xi)$ to be torsion. This lemma establishes that a sufficiently small non-degenerate perturbation of the connection form is index-admissible, that is, every contractible periodic orbit $\ga$ satisfies $\cz(\ga)>3-n$. The assumption in \cite[Lemma 3.3]{GGM2} that $c_1(\xi)=0$ is used to ensure that the determinant line bundle $\Lambda^n_{\C}\xi$ is trivial. When $c_1(\xi)$ is torsion it is no longer true in general, but we have that $(\Lambda_\C^n\xi)^{\otimes N}$ is a trivial line bundle, where $N$ is the smallest positive integer such that $Nc_1(\xi)=0$. Choose a trivialization $\tau: (\Lambda_\C^n\xi)^{\otimes N} \to M \times \C$ which corresponds to a choice of a non-vanishing section $\sec$ of $(\Lambda_\C^n\xi)^{\otimes N}$. The choice of this trivialization furnishes a unique way to symplectically  trivialize $\oplus_1^N\xi$ along periodic orbits of $\alpha$ up to homotopy. As a matter of fact, given a periodic orbit $\gamma$, let $\Phi: {\gamma^*\oplus_1^N\xi} \to S^1 \times \C^{nN}$  be a trivialization of $\oplus_1^N\xi$ over $\gamma$ as a Hermitian vector bundle such that its highest complex  exterior power coincides with $\tau$. This condition fixes the homotopy class of $\Phi$:  given any other such trivialization $\Psi$ we have, for every $t \in S^1$, that $\Phi_t \circ \Psi_t^{-1}: \C^{nN} \to \C^{nN}$  has complex determinant equal to one and therefore the Maslov index of the symplectic path $t \mapsto \Phi_t \circ \Psi_t^{-1}$ vanishes, where $\Phi_t:=\pi_2 \circ \Phi|_{{\gamma^*\oplus_1^N\xi}(t)}$ and  $\Psi_t:=\pi_2 \circ \Psi|_{{\gamma^*\oplus_1^N\xi}(t)}$ with $\pi_2: S^1 \times \C^{nN} \to \C^{nN}$ being the projection onto the second factor; cf. \cite{AM4,McL2}. Notice that this trivialization is closed under iterations, that is, the trivialization induced on $\gamma^j$ coincides, up to homotopy, with the $j$-th iterate of the trivialization over $\gamma$.

Now, one can define the Conley-Zehnder index $\cz(\gamma;\sec)$ of a closed orbit $\gamma$ in the following way. By the previous discussion, $\sec$ induces a unique up to homotopy symplectic trivialization $\Phi: {\gamma^*\oplus_1^N\xi} \to S^1 \times \R^{2nN}$. Using this trivialization, the linearized Reeb flow gives the symplectic path
\[
\Gamma(t) = \Phi_t \circ \oplus_1^N d\phi_\alpha^t(\gamma(0))|_\xi \circ \Phi_0^{-1},
\]
where $\phi^t_\alpha$ is the Reeb flow of $\alpha$. Then the Conley-Zehnder index is defined as
\[
\cz(\gamma;\sec)=\frac{\cz(\Ga)}{N}.
\]
It turns out that if $\ga$ is contractible then this index is an integer and does not depend on the choice of $\sec$ since the trivialization of the contact structure is homotopic to a trivialization over a capping disk;  see \cite[Section 3]{AM3}.

The previous discussion allows us to define the mean index for all finite segments of Reeb orbits, not necessarily closed; see, e.g., \cite{Es}. This index depends continuously on the initial condition and the contact form (in the $C^2$-topology), and for closed Reeb orbits it agrees with the standard mean index. In this way, it is easy to see that the proof of \cite[Lemma 3.3]{GGM2} works verbatim under the assumption that $c_1(\xi)$ is torsion.
\end{proof}

\subsection{Resonance relations}
\label{sec:resonance}

Let $\ga$ be an isolated (possibly degenerate) closed Reeb orbit and denote by $\HC_*(\ga)$ its local equivariant symplectic homology; see \cite{GG,HM}.  For a non-degenerate orbit $\ga$, we have that
\begin{equation*}
\HC_*(\gamma) =
\begin{cases}
\Nov\ \ \text{if }*=\cz(\ga)\text{ and }\ga\text{ is good} \\
0\ \ \text{otherwise}.
\end{cases}
\end{equation*}
The Euler characteristic of $\ga$ is defined as
\[
\chi(\gamma) = \sum_{m \in \Z} (-1)^m\dim \HC_m(\gamma).
\]
This sum is finite. When $\gamma$ is non-degenerate,
\begin{equation*}
\chi(\gamma) =
\begin{cases}
(-1)^{\mu(\gamma)}\ \ \text{if }\ga\text{ is good} \\
0\ \ \text{otherwise}.
\end{cases}
\end{equation*}
The local {\it mean} Euler characteristic of $\gamma$ is
\[
\hat\chi(\gamma) = \lim_{j\to\infty} \frac{1}{j} \sum_{k=1}^j \chi(\gamma^k).
\]
The limit above exists and is rational; see \cite{GGo}. When $\ga$ is strongly non-degenerate, i.e. all iterates of $\ga$ are non-degenerate, we have
\begin{equation*}
\hat\chi(\gamma) =
\begin{cases}
(-1)^{\cz(\ga)}\ \text{ if }\ga^2\text{ is good} \\
(-1)^{\cz(\ga)}/2\ \text{ if }\ga^2\text{ is bad}.
\end{cases}
\end{equation*}

Assume now that $\alpha$ is index-positive/index-negative and has finitely many distinct simple contractible closed orbits $\ga_1,\dots,\ga_r$. This assumption ensures that the positive/negative mean Euler characteristic
\begin{equation*}
\label{eq:def_MEC}
\chi_\pm(M) := \lim_{j\to\infty} \frac{1}{j} \sum_{m=0}^j (-1)^m b_{\pm m}
\end{equation*}
is well defined, where $b_m := \dim \HC^0_m(M)$ is the $m$-th Betti number; see \cite{GGo}. The mean Euler characteristic is related to local equivariant symplectic homology via the resonance relation
\begin{equation}
\label{eq:resonance}
\sum_{i=1}^{r} \frac{\hat\chi(\gamma_i)}{\mi (\gamma_i)} = \chi_\pm(M),
\end{equation}
proved in \cite{GK} in the non-degenerate case and in \cite{HM} in general. Here the right-hand side is $\chi_+$ when $\alpha$ is index-positive and $\chi_-$ when $\alpha$ is index-negative.

\section{Index recurrence}
\label{sec:IRT}

A crucial ingredient in the proof of Theorem \ref{thm:main} is the following combinatorial result addressing the index behavior under iterations taken from \cite[Theorem 4.1]{GGM2}. This result can also be deduced from the so-called enhanced common index jump theorem due to Duan, Long and Wang \cite{DLW}; see also \cite{Lon02,LZ}.

\begin{theorem}[\cite{GGM2}]
\label{thm:IRT}
Let $\Phi_1,\ldots,\Phi_r$ be a finite collection of strongly non-degenerate elements of $\TSp(2n)$ with $\mi(\Phi_i)>0$ for all $i$. Then for any $\eta>0$ and any $\ell_0\in\N$, there exist two integer sequences $d_j^\pm\to\infty$ and two sequences of integer vectors $\vk^\pm_j=\big(k_{1j}^\pm,\ldots,k_{rj}^\pm\big)$ with all components going to infinity as $j\to\infty$, such that for all $i$ and $j$, and all $\ell\in\Z$ in the range $1\leq |\ell|\leq \ell_0$, we have
\begin{enumerate}
\item[\reflb{cond:i}{\rm{(i)}}]
  $\big|\mi\big(\Phi^{k_{ij}^\pm}_i\big)-d_j^\pm\big|<\eta$ with the
  equality
  $\mi\big(\Phi^{k_{ij}^\pm}_i\big) =
  \cz\big(\Phi^{k_{ij}^\pm}_i\big)=d_j^\pm$ whenever $\Phi_i(1)$ is
  hyperbolic,
\item[\reflb{cond:ii}{\rm{(ii)}}]
  $\cz\big(\Phi^{k_{ij}^\pm+\ell}_i\big)= d_j^\pm + \cz(\Phi^\ell_i)$,
  and
\item[\reflb{cond:iii}{\rm{(iii)}}]
  $\cz\big(\Phi^{k_{ij}^-}_i\big)-d_j^-=
  -\big(\cz\big(\Phi^{k_{ij}^+}_i\big)-d_j^+\big)$.
\end{enumerate}
Furthermore, for any $N\in \N$ we can make all $d_j^\pm$ and $k_{ij}^\pm$ divisible by~$N$.
\end{theorem}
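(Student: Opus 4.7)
The plan is to reduce the theorem to a simultaneous Diophantine approximation of the rotation angles at the endpoints $\Phi_i(1)$ together with the mean indices $\mi_i:=\mi(\Phi_i)$, and then to exploit the piecewise constant nature of the correction terms in Long's iteration formula to upgrade the Dirichlet-type inequalities into exact integer identities. First I would put each $\Phi_i(1)$ in symplectic normal form: since $\Phi_i$ is strongly non-degenerate, the spectrum splits into hyperbolic pairs and elliptic pairs $e^{\pm 2\pi i\theta_{i,s}}$ with $\theta_{i,s}\in (0,1)\setminus\{1/2\}$, and every iterate $\Phi_i^k$ stays non-degenerate because $k\theta_{i,s}\notin\Z$. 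Long's iteration formula then reads
\[
\cz(\Phi_i^k)=k\,\mi_i - R_i + \sum_{s}\varepsilon_{i,s}(k),
\]
where $R_i$ is an iteration invariant, the sum runs over elliptic blocks only, and each $\varepsilon_{i,s}(k)\in\{-1,0,+1\}$ is an explicit piecewise constant function of the fractional part $\{k\theta_{i,s}\}$ which vanishes in a small neighbourhood of $0$. In particular, if $\{k\theta_{i,s}\}$ lies closer to $0$ than any jump point of $\varepsilon_{i,s}$ in the window $[-\ell_0\theta_{i,s},\ell_0\theta_{i,s}]\pmod 1$, then $\varepsilon_{i,s}(k+\ell)=\varepsilon_{i,s}(\ell)$ for every $|\ell|\leq\ell_0$.

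Next I would apply Dirichlet's simultaneous approximation theorem, restricted to multiples of $N$, to the real vector whose entries are the $1/\mi_i$ and the $\theta_{i,s}/\mi_i$. This produces a sequence $D_j^+\in N\Z_{>0}$ tending to infinity and integer vectors $(k_{1j}^+,\dots,k_{rj}^+)\in (N\Z_{>0})^r$ such that
\[
|k_{ij}^+\mi_i-D_j^+|<\delta_j\quad\text{and}\quad \|k_{ij}^+\theta_{i,s}\|<\delta_j
\]
for a prescribed sequence $\delta_j\to 0$. Choosing $\delta_j$ small with respect to $\eta$, to the finitely many angles $\theta_{i,s}$ and to $\ell_0$, I would then set $d_j^+:=D_j^+$ (after a uniform adjustment absorbing the constants $R_i-\sum_s\varepsilon_{i,s}(k_{ij}^+)$, which stabilise for $j$ large because of the neighbourhood-of-$0$ property). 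This yields $|\mi(\Phi_i^{k_{ij}^+})-d_j^+|<\eta$, with exact equality $\mi=\cz=d_j^+$ in the purely hyperbolic case because there are no $\varepsilon_{i,s}$-terms and $\mi_i\in\Z$; this settles \ref{cond:i}, and \ref{cond:ii} follows immediately from Long's formula and the combinatorial identity $\varepsilon_{i,s}(k_{ij}^++\ell)=\varepsilon_{i,s}(\ell)$ obtained above.

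For the companion sequence $k_{ij}^-$ and the symmetry \ref{cond:iii}, I would pair each approximation with its reflection: whenever Dirichlet's box principle produces $\{k_{ij}^+\theta_{i,s}\}$ as a small positive number, it also produces iterates $k_{ij}^-\in N\Z_{>0}$ with $\{k_{ij}^-\theta_{i,s}\}=1-\epsilon_{ij,s}^-$ for a correspondingly small $\epsilon_{ij,s}^-$. Under this reflection the piecewise constant function $\varepsilon_{i,s}$ is antisymmetric about $0\pmod 1$ in the relevant window, so $\varepsilon_{i,s}(k_{ij}^-)=-\varepsilon_{i,s}(k_{ij}^+)$, and hence $\cz(\Phi_i^{k_{ij}^-})-d_j^-=-(\cz(\Phi_i^{k_{ij}^+})-d_j^+)$ with $d_j^-$ defined in the analogous way. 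Divisibility by $N$ of $d_j^\pm$ and $k_{ij}^\pm$ is built in because Dirichlet was run inside $N\Z$ from the outset.

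The principal obstacle is the step turning the approximate Diophantine inequalities into the \emph{exact} integer identity \ref{cond:ii}: this requires that $\delta_j$ be small not only absolutely but small relative to the gap between consecutive discontinuities of the $\varepsilon_{i,s}$ inside the finite window determined by $\ell_0$ and the angles. The gaps depend only on finitely many data, so a single sequence $\delta_j\to 0$ can be chosen that works uniformly in $i,s$ and $|\ell|\leq\ell_0$; carrying out this bookkeeping is the substance of \cite[Theorem 4.1]{GGM2} and of the enhanced common index jump theorem of Duan--Long--Wang \cite{DLW}, and in practice I would either cite those results or verify the window estimate by hand.
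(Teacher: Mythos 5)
The paper itself gives no proof: Theorem \ref{thm:IRT} is simply quoted from \cite[Theorem 4.1]{GGM2}, with the remark that it can also be deduced from the enhanced common index jump theorem of Duan--Long--Wang \cite{DLW}. Your sketch follows the same iteration-formula-plus-simultaneous-approximation strategy that underlies those references, and you correctly isolate the two working mechanisms: Diophantine approximation of the quantities $1/\mi_i$ together with the elliptic rotation angles, and the stabilisation of the integer index corrections once the fractional parts $\{k\theta_{i,s}\}$ are pushed close to $0$ relative to the window determined by $\ell_0$. One step is stated too quickly, namely the construction of $k_{ij}^-$: a single application of Dirichlet's box principle does not simultaneously hand you iterates with $\{k\theta_{i,s}\}$ near $0$ from above \emph{and} iterates with $\{k\theta_{i,s}\}$ near $1$ from below while keeping all the $k\mi_i$ pinned near a common $d^-$. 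The usual device is to work on the torus $\mathbb{T}^D$ carrying these data and use that the closure of the $\Z$-orbit of the relevant vector is a closed subgroup, hence symmetric under $v\mapsto -v$, so that both one-sided approaches are available (equivalently, one builds $k^-$ explicitly from $k^+$ as in Long--Zhu). Since you ultimately defer to \cite{GGM2,DLW} for this bookkeeping, your proposal is consistent with the paper's treatment, which likewise cites rather than reproves the result.
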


\section{Proof of Theorem \ref{thm:main} under the assumption that $c_B>n/2$}
\label{sec:proof main}

Throughout this section, we will assume that $c_B>n/2$. As explained in the introduction, this hypothesis is dropped in Appendix \ref{appendix:c_B}.

\subsection{Outline of the proof}

First of all, let us give an idea of the proof of Theorem \ref{thm:main} in the case where $n$ is odd. The idea for even $n$ is similar, and the detailed proofs in both cases are in Section \ref{sec:proof main: details}.

The fact that the contact form $\alpha$ is non-degenerate and lacunary implies that every contractible periodic orbit of $\alpha$ is good and the differential in $\CC_*^0(\alpha)$ vanishes, where $\CC_*^0(\alpha)$ is the subcomplex of $\CC_*(\alpha)$ formed by the contractible closed orbits of $\alpha$. Using this, the computation of $\HC^0_*(M)$ given by \eqref{eq:CH} and a combinatorial lemma (Lemma \ref{lemma:combinatorial}) we can show that $\alpha$ has finitely many simple contractible closed orbits. The idea is that, since every contractible orbit contributes to $\HC^0_*(M)$, the rank of $\HC^0_*(M)$ grows with respect to the number of simple contractible orbits, getting a contradiction if we have infinitely many orbits. This part of the proof does not depend on the parity of $n$.

So we have finitely many simple contractible orbits $\{\ga_1,\dots,\ga_r\}$ and we have to show that $r=r_B$. Since the differential in $\CC_*^0(\alpha)$ vanishes, we have, using \eqref{eq:CH}, that every contractible orbit $\ga$ satisfies $\cz(\ga) = n\ \text{(mod 2)}$, $\cz(\ga) \geq \kmin:=\min\{k \in \Z;\ \HC^0_k(M)\neq 0\} \geq 1$ and $\mi(\ga)>0$. Using this and Theorem \ref{thm:IRT}, we find integers $d,k_1,\dots,k_r$ such that $d=2sc_B$ for some $s\in \N$,
\[
\cz(\ga_i^{k_i -\ell}) \leq d - 1\quad\forall 1 \leq \ell < k_i
\]
and
\[
\cz(\ga_i^{k_i +\ell}) \geq d + 1\quad\forall \ell \geq 1.
\]
This implies that
\begin{align*}
\sum_{m=\kmin}^{d} c_m & = \sum_{i=1}^r \#\{1\leq j\leq k_i;\ \cz(\ga_i^j)\leq d\} \\ 
& =  \sum_{i=1}^r k_i -  \sum_{i=1}^r \#\{1\leq j\leq k_i;\ \cz(\ga_i^j)>d\} \\ 
& = sr_B - r_+,
 \end{align*}
where $c_m$ is the $m$-th Morse type number, defined as the number of contractible periodic orbits (simple or not) with index $m$, $r_+:=\#\{1\leq i\leq r;\ \cz(\ga_i^{k_i}) > d\}$ and the equation $\sum_{i=1}^r k_i  = sr_B$ follows from Lemma \ref{lemma:resonance} which is proved using the resonance relations mentioned in Section \ref{sec:resonance}. This equation says that the truncated mean Euler characteristic $\sum_{m=\kmin}^{d} c_m$ (note that $c_m=0$ if $m$ is even) equals $sr_B$ up to the correction term $r_+$.

On the other hand, let $b_m$ be the $m$-th Betti number. We have, by the fact that the differential vanishes and \eqref{eq:CH},
\begin{align*}
\sum_{m=\kmin}^{d} c_m & = \sum_{m=\kmin}^{d} b_m \\
& = sr_B - \sum_{i=0}^{n-1} \dim \H_i(B;\Q) \\
& = sr_B - r_B/2,
\end{align*}
implying that $r_+=r_B/2$. It furnishes at least $r_B/2$ contractible simple orbits. Note that in the second and third equations we used the fact that $n$ is odd and in the second equation we used the hypothesis that $c_B>n/2$.

Repeating the above argument and applying again Theorem \ref{thm:IRT}, we can find integers $d',k'_1,\dots,k'_r$ such that
\[
r_-:=\#\{1\leq i\leq r;\ \cz(\ga_i^{k'_i})>d'\} = r_B/2.
\]
Now the point is that these integers, by property \ref{cond:iii} of Theorem \ref{thm:IRT}, satisfy
\[
\#\{1\leq i\leq r;\ \cz(\ga_i^{k'_i})>d'\} = \#\{1\leq i\leq r;\ \cz(\ga_i^{k_i})<d\},
\]
giving more distinct $r_B/2$ contractible simple orbits.

Finally, to prove that $r=r_B$, we note that
\[
\#\{1\leq i\leq r;\ \cz(\ga_i^{k_i})<d\} + \#\{1\leq i\leq r;\ \cz(\ga_i^{k_i})>d\} = r_B.
\]
But $d=2sc_B$ is even which implies that $\cz(\ga_i^{k_i})\neq d$ since $\cz(\ga_i^{k_i})=n\ \text{(mod 2)}$ is odd. Therefore,
\[
\#\{1\leq i\leq r;\ \cz(\ga_i^{k_i})<d\text{ or }\cz(\ga_i^{k_i})>d\} = r.
\]

\subsection{Proof of the theorem}
\label{sec:proof main: details}

As explained in Section \ref{sec:esh}, under the assumptions of the theorem, we have the positive equivariant symplectic homology of contractible orbits $\HC^0_*(M)$ given by \eqref{eq:CH}. Note that if $M$ does not admit a ``nice'' filling, in the sense of assumption \ref{cond:F2}, we assume that our non-degenerate lacunary contact form $\alpha$ is index-admissible.

By the discussion in the same section, $\HC^0_*(M)$ is the homology of a chain complex $\CC^0_*(\alpha)$ generated by the good contractible closed orbits of $\alpha$. Since $\alpha$ is lacunary, every periodic orbit is good and the differential in $\CC^0_*(\alpha)$ vanishes. Therefore,
\begin{equation}
\label{eq:perorbs}
\HC^0_k(M) \cong \CC^0_k(\alpha) \cong \oplus_{\ga \in \PP^0(\alpha);\, \cz(\ga)=k} \Lambda
\end{equation}
for every $k \in \Z$, where $\PP^0(\alpha)$ is the set of (not necessarily simple) contractible closed orbits of $\alpha$. In other words, every contractible closed orbit of $\alpha$ contributes to the positive equivariant symplectic homology.

It follows from \eqref{eq:perorbs} and \eqref{eq:CH} that, for every contractible orbit $\ga$ of $\alpha$,
\begin{equation}
\label{eq:mi}
\mi(\ga)>0.
\end{equation}
Indeed, since $|j\mi(\ga) - \cz(\ga^j)|<n$ for every $j$, if we have some $\ga$ such that $\mi(\ga) \leq 0$ then we have either that $\HC^0_k(M)$ is non-trivial in arbitrarily large negative degrees $k$ (if  $\mi(\ga) < 0$) or $\HC^0_k(M)$ has infinite dimension in some degree $k \in (-n,n)$  (if $\mi(\ga) = 0$) and both situations are impossible by \eqref{eq:CH}.

Define
\[
\kmin:=\min\{k \in \Z;\ \HC^0_k(M)\neq 0\}.
\]
By \eqref{eq:CH} and our hypotheses (including the condition that $c_B>n/2$),
\[
\kmin=2c_B-n\geq 1.
\]

In what follows, we will prove Theorem \ref{thm:main} under the assumption that $c_B>n/2$. For a clear exposition, we will break down the proof in claims and steps.

\begin{remark}
\label{rmk:key point}
As the reader can easily check, the only point in the proof below that uses that $M$ is a prequantization is the computation of $\HC^0_*(M)$ given by \eqref{eq:CH}. The proof works verbatim for any closed contact manifold $M$ (under the assumption \ref{cond:F2} or that $\alpha$ is index-admissible) whenever we have \eqref{eq:CH} with $c_B>n/2$. (This assumption on $c_B$ can be completely dropped as explained in Appendix \ref{appendix:c_B}.) It will be crucial in the proof of Theorem \ref{thm:main orbifolds} established in Section \ref{sec:proof main orbifolds}.
\end{remark}

\vskip .3cm
\noindent
{\bf Claim 1. $\alpha$ has finitely many simple contractible closed orbits}
\vskip .2cm

This part of the proof follows from the work of G\"urel \cite[Theorem 1.5]{Gu}. For the sake of completeness, we will reproduce her argument. Define
\[
b = \limsup_{k\to\infty} \sum_{i=0}^{2n} \dim \HC^0_{k+i}(M).
\]
By \eqref{eq:CH}, $b$ is a finite integer.

The key point is the following combinatorial lemma proved in \cite{Gu} which uses the aforementioned fact that every contractible orbit of $\alpha$ has positive mean index:

\begin{lemma} \cite[Lemma 3.2]{Gu}
\label{lemma:combinatorial}
Assume that $\alpha$ has a collection of $m$ geometrically distinct contractible periodic orbits $\ga_1,\dotsc, \ga_m$. Then for every sufficiently small $\ep>0$, there exist infinitely many disjoint intervals $I$ of length $2n+\ep$ such that for some positive integers $k_1\geq 1,\dots, k_m \geq 1$ (depending on the interval), the iterated orbits $\ga_1^{k_1},\dotsc, \ga_m^{k_m}$ all have indexes in the interval $I$.
\end{lemma}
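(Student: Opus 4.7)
The plan is to exploit the near-linear growth of the Conley--Zehnder index under iteration, combined with a simultaneous Diophantine approximation argument. For every non-degenerate closed Reeb orbit $\ga$, one has the classical iteration bound
\[
|\cz(\ga^k) - k\mi(\ga)| \leq n,
\]
comparing the Conley--Zehnder index of the $k$-th iterate with $k$ times the mean index. Consequently, to place all of $\cz(\ga_1^{k_1}),\dots,\cz(\ga_m^{k_m})$ inside a single interval of length $2n+\ep$, it suffices to find positive integers $k_1,\dots,k_m$ and a real number $T$ such that $|k_i\mi(\ga_i) - T| < \ep/2$ for every $i$; the resulting interval is $I = [T - n - \ep/2,\, T + n + \ep/2]$.

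Such tuples are supplied by Dirichlet's theorem on simultaneous Diophantine approximation: given positive reals $a_1,\dots,a_m$ and any $\delta > 0$, there exist arbitrarily large $T > 0$ and positive integers $k_1,\dots,k_m$ with $|k_i a_i - T| < \delta$ for every $i$. I would apply this with $a_i = \mi(\ga_i)$ and $\delta = \ep/2$; the resulting tuples yield infinitely many intervals $I$, which are automatically distinct because their centers can be taken to diverge to infinity (for instance by extracting a subsequence of the produced $T$'s with consecutive gaps exceeding $2n+\ep$, which also makes the intervals pairwise disjoint).

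The hypothesis of this Diophantine step --- that each $\mi(\ga_i)$ is strictly positive --- is the main point that must be justified, and it is where the setup enters. By \eqref{eq:perorbs} and \eqref{eq:CH}, the homology $\HC^0_*(M)$ has finite rank in every degree and vanishes in degrees below $\kmin \geq 1$, while every iterate $\ga^k$ contributes a generator to $\HC^0_{\cz(\ga^k)}(M)$. Only finitely many iterates of a given $\ga$ can therefore share a common Conley--Zehnder index, forcing $\cz(\ga^k)\to\infty$, which together with the iteration bound above yields $\mi(\ga) > 0$. With positivity in hand, the remainder of the argument reduces to a direct application of Dirichlet's theorem and the iteration inequality, as outlined, so the principal obstacle is really the verification of the positivity of the mean indices rather than the combinatorial mechanism itself.
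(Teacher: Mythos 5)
Your proof is correct and, as far as I can tell, follows the same route as G\"urel's original argument: the iteration inequality $|\cz(\ga^k)-k\mi(\ga)|\le n$ (in fact strict for a non-degenerate path in $\Sp(2n)$) reduces the problem to simultaneous Diophantine approximation of the mean indices, and the pigeonhole principle on the $(m-1)$-torus produces infinitely many admissible tuples $(T,k_1,\dots,k_m)$ with $T\to\infty$. The paper itself does not reproduce this proof; it cites \cite[Lemma 3.2]{Gu} and applies it directly, so your write-up fills in the argument behind the citation rather than offering an alternative.

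Two small remarks. First, you are right that the lemma as stated implicitly needs $\mi(\ga_i)>0$ for all $i$ (otherwise iterates with opposite signs of the mean index cannot be confined to a common bounded interval for arbitrarily large $T$). In the paper this positivity is extracted from \eqref{eq:CH} and \eqref{eq:perorbs} exactly as you do --- the paper records it as \eqref{eq:mi}, but only inside the proof of Claim~2; it is implicitly being taken for granted already in Claim~1 where the lemma is invoked. Your decision to flag this and prove it before the Diophantine step is the right move, and your argument (finite rank in each degree, vanishing below $\kmin\ge 1$, every contractible iterate is a generator since $\alpha$ is lacunary, hence $\cz(\ga^k)\to\infty$, hence $\mi(\ga)>0$) is the same one the paper uses. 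Second, the statement you attribute to ``Dirichlet's theorem'' --- existence of arbitrarily large $T$ and positive integers $k_i$ with $|k_ia_i-T|<\delta$ --- is not quite the classical formulation, but it does follow from the pigeonhole/Weyl argument you sketch (set $T=k_1a_1$ and approximate $k_1a_1/a_i$ by integers simultaneously; the closure of the $\Z$-orbit on the torus is a subgroup containing $0$, giving infinitely many good $k_1$). It would be cleaner to phrase it that way rather than as a named theorem, but the mathematics is sound.
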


Since $\kmin > -\infty$, it follows from \eqref{eq:perorbs} and the previous lemma that $\alpha$ has at most $b$ simple contractible closed orbits.

\begin{remark}
Note that the dimension of $M$ in \cite{Gu} is $2n-1$ and here is $2n+1$.
\end{remark}

\begin{remark}
\label{eq:rmk:bound b}
When $M$ is the standard contact sphere $S^{2n+1}$ we have $b=r_{B}=n+1$. However, there are several examples where $b>r_B$: for instance, when $M=S^*S^m$ we have $r_B=m$ when $m$ is even (resp. $r_B=m+1$ when $m$ is odd) and $b=m+2$ when $m$ is even (resp. $b=m+3$ when $m$ is odd).
\end{remark}

\vskip .3cm
\noindent
{\bf Claim 2. $\alpha$ has precisely $r_B$ simple contractible closed orbits}
\vskip .2cm

This is the more involved part of the proof. In what follows, we will use some ideas from \cite{DX} and \cite{GGM2}. We will also use the aforementioned fact that, by the universal coefficient theorem, $\H_*(B;\Nov) \cong \H_*(B;\Q) \otimes \Nov$ since $\Nov$ is a field, which implies that $\dim \H_k(B;\Nov) = \dim \H_k(B;\Q)$ for every $k$.

By the first claim, we have finitely many simple contractible closed orbits $\{\ga_1,\dots,\ga_r\}$. By \eqref{eq:perorbs}, \eqref{eq:CH} and our hypothesis that $\H_k(B;\Q)=0$ for every odd $k$, we conclude that
\begin{equation}
\label{eq:parity}
\cz(\ga_i^j)=n\ \text{(mod 2)}
\end{equation}
for every $i \in \{1,\dots,r\}$ and $j \in \N$. In particular, every periodic orbit is good. It also follows from \eqref{eq:perorbs} and \eqref{eq:CH} that
\begin{equation}
\label{eq:minind}
\quad\cz(\ga_i^j) \geq \kmin \geq 1\quad\forall i,j.
\end{equation}

By \eqref{eq:mi} we have that
\[
\ell_0:=\max_{1\leq i \leq r}\{\min\{k_0 \in \N;\ \cz(\ga_i^{k+\ell}) \geq  \cz(\ga_i^{k})\ \forall k\geq 1\text{ and }\forall \ell \geq k_0\}\}
\]
is well defined, i.e., the minima are finite (it follows from the aforementioned fact that $|j\mi(\ga_i) - \cz(\ga_i^j)|<n$ for every $j$). By Theorem \ref{thm:IRT}, given $N \in \N$, $\eta>0$ and $\ell_0$ as above we have two sequences of integer vectors $(d^\pm_j,k^\pm_{1j},\dots,k^\pm_{rj})$ satisfying conditions \ref{cond:i}, \ref{cond:ii} and \ref{cond:iii} and such that all $d^\pm_j,k^\pm_{1j},\dots,k^\pm_{rj}$ are divisible by $N$. We will only need one such vector from each sequence. Hence set
\begin{equation}
\label{eq:integers}
(d,k_1,\dots,k_r):=(d^+_1,k^+_{11},\dots,k^+_{r1})\text{ and }(d',k'_1,\dots,k'_r):=(d^-_1,k^-_{11},\dots,k^-_{r1}).
\end{equation}
The following lemma is one of the key steps in the proof and it is proved using the resonance relations mentioned in Section \ref{sec:resonance}; cf.\ \cite[Sublemma 5.2]{AM2} and \cite[Lemma 5.1]{GGM2}.

\begin{lemma}
\label{lemma:resonance}
The numbers $N$ and $\eta$ can be chosen such that $d=2sc_B$ for some $s \in \N$ and
\[
\sum_{i=1}^r k_i = sr_B.
\]
The same holds for $d',k'_1,\dots,k'_r$.
\end{lemma}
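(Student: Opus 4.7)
The plan is to use the resonance relation \eqref{eq:resonance} together with the homology computation \eqref{eq:CH} to pin down the sum $\sum_i 1/\mi(\ga_i)$, and then convert property \ref{cond:i} of Theorem \ref{thm:IRT} (which approximately says $k_i \cdot \mi(\ga_i) \approx d$) into the exact integer identity by choosing $\eta$ small enough.

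First I would compute the mean Euler characteristic $\chi_+(M)$. Since $\H_k(B;\Q)=0$ for odd $k$ and $2c_B$ is even, formula \eqref{eq:CH} shows that $b_m=\dim\HC^0_m(M)$ vanishes unless $m\equiv n\pmod 2$, and for large windows of degree each ``block'' $\H_{*-2mc_B+n}(B;\Nov)$ contributes a full copy of $\H_*(B;\Q)$ of total rank $r_B$, one block per $2c_B$ units of degree. A direct count then yields
\[
\chi_+(M)=(-1)^n\cdot\frac{r_B}{2c_B}.
\]
On the dynamical side, every contractible orbit is good and satisfies $\cz(\ga_i^j)\equiv n\pmod 2$ by \eqref{eq:parity}, so $\hat\chi(\ga_i)=(-1)^n$. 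Since $\alpha$ is index-positive by \eqref{eq:mi}, the resonance relation \eqref{eq:resonance} gives
\[
\sum_{i=1}^{r}\frac{1}{\mi(\ga_i)}=\frac{r_B}{2c_B}.
\]

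Next I would choose the divisibility parameter $N$ in Theorem \ref{thm:IRT} to be a multiple of $2c_B$. This makes $d=d_1^+$ divisible by $2c_B$, so $d=2sc_B$ for some $s\in\N$, which establishes the first assertion. Property \ref{cond:i} of Theorem \ref{thm:IRT} then gives $|k_i\mi(\ga_i)-d|<\eta$ for each $i$, equivalently $|k_i-d/\mi(\ga_i)|<\eta/\mi(\ga_i)$. Summing and using the resonance identity,
\[
\Bigl|\sum_{i=1}^r k_i - d\sum_{i=1}^r\frac{1}{\mi(\ga_i)}\Bigr|\leq \eta\sum_{i=1}^r\frac{1}{\mi(\ga_i)}=\frac{\eta\,r_B}{2c_B},
\]
and the right-hand product equals $d\cdot r_B/(2c_B)=sr_B$. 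Hence
\[
\Bigl|\sum_{i=1}^r k_i - sr_B\Bigr|<\frac{\eta\,r_B}{2c_B}.
\]
Choosing $\eta<2c_B/r_B$ forces this difference to be strictly less than $1$. Since $\sum_i k_i$ and $sr_B$ are both integers, the identity $\sum_{i=1}^r k_i=sr_B$ follows. The same argument, applied to the ``minus'' sequence and using the symmetric property that $\mi(\ga_i)$ is independent of the sign and the resonance identity is the same (since $\chi_+(M)=\chi_-(M)$ up to sign handled identically for lacunary $\alpha$), yields the analogous equality for $d'$ and $k_i'$.

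The only delicate points I anticipate are (i) justifying the computation of $\chi_+(M)$ carefully from \eqref{eq:CH} (in particular, that the contribution of the finitely many ``boundary'' blocks is absorbed by the $1/j$ normalization), and (ii) checking that the index-positivity hypothesis needed for the resonance relation \eqref{eq:resonance} is indeed established for us by \eqref{eq:mi}. Both are essentially bookkeeping, so the substantive content of the lemma is the interplay between resonance and Theorem \ref{thm:IRT}, which the argument above captures.
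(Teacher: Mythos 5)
Your proof is correct and follows essentially the same route as the paper: compute $\chi_+(M)=(-1)^n r_B/(2c_B)$ from \eqref{eq:CH}, take $N$ a multiple of $2c_B$ and $\eta$ small, then combine the resonance relation \eqref{eq:resonance} with property \ref{cond:i} of Theorem \ref{thm:IRT} and integrality of both sides. One small clarification: for the minus sequence $(d',k'_i)$ the same $\chi_+(M)$ is used — the superscripts $\pm$ in Theorem \ref{thm:IRT} are unrelated to $\chi_\pm$, which refers to index-positivity versus index-negativity of the contact form — so your invocation of $\chi_-$ is unnecessary and the argument for the minus sequence is verbatim identical.
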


\begin{proof}
Let $N$ be any positive integer multiple of $2c_B$ so that $d=2sc_B$ for some $s \in \N$. It is easy to see from \eqref{eq:CH} that
\[
\chi_+(M) = (-1)^n \frac{r_B}{2c_B},
\]
Now, take $\eta$ sufficiently small such that $\eta \sum_{i=1}^r  \frac{1}{\mi(\ga_i)}<1$. Using the resonance relation \eqref{eq:resonance} and \eqref{eq:parity}, we conclude that
\begin{align*}
d\cdot\chi_+(M) 
& = (-1)^n \sum_{i=1}^r \frac{d}{\mi (\gamma_i)} \\
& = (-1)^n \sum_{i=1}^r  k_i +  (-1)^n \sum_{i=1}^r  \frac{(d-k_i\mi(\ga_i))}{\mi(\ga_i)} \\
& = (-1)^n \sum_{i=1}^r  k_i +  (-1)^n \sum_{i=1}^r  \frac{(d-\mi(\ga_i^{k_i}))}{\mi(\ga_i)},
\end{align*}
where in the last equation we used the homogeneity of the mean index, that is, $\mi(\ga_i^j)=j\mi(\ga_i)$ for every $j$. By property \ref{cond:i} of Theorem \ref{thm:IRT} and the condition on
$\eta$,
\[
\bigg| \sum_{i=1}^r \frac{(d-\mi(\ga_i^{k_i}))}{\mi(\ga_i)}\bigg| < \eta \sum_{i=1}^r\frac{1}{\mi(\ga_i)} <1.
\]
Note that by our choice of $N$ the numbers $d\cdot\chi_+(M)$ and $k_i$ for all $i$ are integers. Therefore,
\[
d\cdot\chi_+(M) = (-1)^n \sum_{i=1}^r  k_i.
\]
Obviously, the same argument works for $d',k'_1,\dots,k'_r$.
\end{proof}

Let us now break down the proof of Claim 2 into two cases, according to the parity of~$n$.

\vskip .3cm
\noindent
{\bf Case 1. $n$ is odd}
\vskip .2cm

We will split the proof in this case in three steps.

\vskip .3cm
\noindent
{\bf Step 1. $\alpha$ has at least $r_B/2$ simple contractible closed orbits}
\vskip .2cm

By property \ref{cond:ii} of Theorem \ref{thm:IRT} and \eqref{eq:minind},
\[
\cz(\ga_i^{k_i -\ell}) = d - \cz(\ga_i^\ell) \leq d - 1,\quad\forall 1 \leq \ell \leq \ell_0,
\]
\[
\cz(\ga_i^{k_i +\ell}) = d + \cz(\ga_i^\ell) \geq d + 1,\quad\forall 1 \leq \ell \leq \ell_0,
\]
Choosing $N$ big enough, we can assume that $\ell_0+2\leq \min_{1\leq i\leq r} k_i$. By the definition of $\ell_0$, item \ref{cond:ii} of Theorem \ref{thm:IRT} and \eqref{eq:minind}, we have, for all $\ell_0+1\leq \ell < k_i$,
\[
\cz(\ga_i^{\overbrace{k_i -\ell}^{\geq 1}}) \leq \cz(\ga_i^{\overbrace{k_i -\ell}^{\geq 1}+\overbrace{\ell-1}^{\geq \ell_0}}) = \cz(\ga_i^{k_i -1}) = d - \cz(\ga_i) \leq d - 1
\]
and, for all $\ell \geq \ell_0+1$,
\[
\cz(\ga_i^{k_i +\ell}) = \cz(\ga_i^{\overbrace{k_i +1}^{\geq 1} +\overbrace{\ell-1}^{\geq \ell_0}}) \geq \cz(\ga_i^{k_i +1}) = d + \cz(\ga_i) \geq d + 1.
\]
Thus, we have
\begin{equation}
\label{eq:bound1}
\cz(\ga_i^{k_i -\ell}) \leq d - 1\quad\forall 1 \leq \ell < k_i
\end{equation}
and
\begin{equation}
\label{eq:bound2}
\cz(\ga_i^{k_i +\ell}) \geq d + 1\quad\forall \ell \geq 1.
\end{equation}

Let $c_m=\#\{\ga\in \PP^0(\alpha);\,\cz(\ga)=m\}$ be the $m$-th Morse type number and $b_m=\dim \HC^0_m(M)$ the $m$-th Betti number (which, by \eqref{eq:perorbs}, coincide). Note that $c_m$ and $b_m$ vanish if $m$ is even (recall that $n$ is odd). By \eqref{eq:bound2}, no periodic orbit $\ga_i^{k_i +\ell}$, $\ell\geq 1$, contributes to $\sum_{m=\kmin}^{d} c_m$. Hence,
\begin{align*}
\sum_{m=\kmin}^{d} c_m & = \sum_{i=1}^r \#\{1\leq j\leq k_i;\ \cz(\ga_i^j)\leq d\} \\
& =  \sum_{i=1}^r k_i -  \sum_{i=1}^r \#\{1\leq j\leq k_i;\ \cz(\ga_i^j)>d\} \\
& = sr_B - r_+ \numberthis \label{eq:sumc_m-odd},
 \end{align*}
where $r_+=\#\{1\leq i\leq r;\ \cz(\ga_i^{k_i})>d\}$ and the last equation follows from Lemma \ref{lemma:resonance} and \eqref{eq:bound1}.

On the other hand, by \eqref{eq:CH} and the hypothesis that $c_B>n/2$, we have that
\begin{align*}
\sum_{m=\kmin}^{d} b_m & = sr_B - \sum_{i=0}^{n-1} \dim \H_i(B;\Q) \\
& = sr_B - r_B/2 \numberthis \label{eq:sumb_m-odd},
\end{align*}
where in the first equality we used the fact that $n$ is odd and the last equality holds by Poincar\'e duality and using the facts that $n$ is odd and the homology of $B$ is lacunary.

But, by \eqref{eq:perorbs},
\[
\sum_{m=\kmin}^{d} b_m = \sum_{m=\kmin}^{d} c_m
\]
which implies that
\[
r_+=r_B/2.
\]

\vskip .3cm
\noindent
{\bf Step 2. Existence of other $r_B/2$ simple contractible closed orbits}
\vskip .2cm

Applying the argument of the last step for the integer vector $(d',k_1^-,\dots,k_r^-)$ in \eqref{eq:integers} provided by Theorem \ref{thm:IRT}, we get
\[
r_-:=\#\{1\leq i\leq r;\ \cz(\ga_i^{k'_i})>d'\} = r_B/2.
\]
But, by property \ref{cond:iii} of Theorem \ref{thm:IRT}, we have that
\[
\#\{1\leq i\leq r;\ \cz(\ga_i^{k'_i})>d'\} = \#\{1\leq i\leq r;\ \cz(\ga_i^{k_i})<d\}.
\]

\vskip .3cm
\noindent
{\bf Step 3. Existence of precisely $r_B$ simple contractible closed orbits}
\vskip .2cm

By steps 1 and 2,
\[
\#\{1\leq i\leq r;\ \cz(\ga_i^{k_i})<d\} + \#\{1\leq i\leq r;\ \cz(\ga_i^{k_i})>d\} = r_B.
\]
But $d=2sc_B$ is even which implies that $\cz(\ga_i^{k_i})\neq d$ since $\cz(\ga_i^{k_i})=n\ \text{(mod 2)}$ is odd. Therefore,
\[
\#\{1\leq i\leq r;\ \cz(\ga_i^{k_i})<d\text{ or }\cz(\ga_i^{k_i})>d\} = r.
\]

\vskip .3cm
\noindent
{\bf Case 2. $n$ is even}
\vskip .2cm

We will split the proof in this case in four steps. It is similar to the argument in the case that $n$ is odd although a bit more intricate.

\vskip .3cm
\noindent
{\bf Step 1. Existence of $(r_B-\dim \H_n(B;\Q))/2$ simple contractible closed orbits}
\vskip .2cm

By \eqref{eq:parity} and \eqref{eq:minind} we have that $\cz(\ga_i^j)\geq 2$ for every $i$ and $j$. Arguing as in the case that $n$ is odd, we find integers $d,k_1,\dots,k_r$ such that
\begin{equation}
\label{eq:bound3}
\cz(\ga_i^{k_i -\ell}) \leq d - 2\quad\forall 1 \leq \ell < k_i
\end{equation}
and
\begin{equation}
\label{eq:bound4}
\cz(\ga_i^{k_i +\ell}) \geq d + 2\quad\forall \ell \geq 1.
\end{equation}
Thus, no $\ga_i^{k_i +\ell}$, $\ell \geq 1$, contributes to $\sum_{m=\kmin}^{d + 1} c_m$. Hence,
\begin{align*}
\sum_{m=\kmin}^{d + 1} c_m & = \sum_{i=1}^r \#\{1\leq j\leq k_i;\ \cz(\ga_i^j) \leq d + 1\} \\
& = \sum_{i=1}^r  k_i -  \sum_{i=1}^r \#\{1\leq j\leq k_i;\ \cz(\ga_i^j) > d + 1\} \\
& = sr_B - r_+ \numberthis \label{eq:sumc_m-even},
\end{align*}
where $r_+:=\#\{1\leq i\leq r;\ \cz(\ga_i^{k_i}) > d + 1\}$ and the last equation follows from Lemma \ref{lemma:resonance} and \eqref{eq:bound3}.

On the other hand, by \eqref{eq:CH} and the hypothesis that $c_B>n/2$, we have that
\begin{align*}
\sum_{m=\kmin}^{d + 1} b_m & = sr_B - \sum_{i=0}^{n-2} \dim \H_i(B;\Q) \\
& = sr_B - (r_B-\dim \H_n(B;\Q))/2 \numberthis \label{eq:sumb_m-even},
\end{align*}
where the last equality holds by Poincar\'e duality and using the facts that $n$ is even and the homology of $B$ is lacunary.

But, by \eqref{eq:perorbs},
\[
\sum_{m=\kmin}^{d + 1} b_m = \sum_{m=\kmin}^{d + 1} c_m
\]
which implies that
\[
r_+=(r_B-\dim \H_n(B;\Q))/2.
\]

\vskip .3cm
\noindent
{\bf Step 2. Existence of other $(r_B-\dim \H_n(B;\Q))/2$ simple contractible closed orbits}
\vskip .2cm

Applying the argument of the last step for the  integer vector $(d',k_1^-,\dots,k_r^-)$ in \eqref{eq:integers} provided by Theorem \ref{thm:IRT}, we get
\[
r_-:=\#\{1\leq i\leq r;\ \cz(\ga_i^{k'_i})>d' + 1\} = (r_B-\dim \H_i(B;\Q))/2.
\]
But, by property \ref{cond:iii} of Theorem \ref{thm:IRT}, we have that
\[
\#\{1\leq i\leq r;\ \cz(\ga_i^{k'_i})>d' + 1\} = \#\{1\leq i\leq r;\ \cz(\ga_i^{k_i})<d - 1\}.
\]

\vskip .3cm
\noindent
{\bf Step 3. Existence of more $\dim \H_n(B;\Q)$ simple contractible closed orbits}
\vskip .2cm

By \eqref{eq:bound3} and \eqref{eq:bound4}, the only iterate that can contribute to $\HC^0_{d}(M)$ is $\ga_i^{k_i}$. But all the orbits $\ga_i$ obtained in steps 1 and 2 satisfy either $\cz(\ga_i^{k_i})>d + 1$ or $\cz(\ga_i^{k_i})<d - 1$. Thus, we need at least $\dim \HC^0_{d}(M)=\dim \H_n(B;\Q)$ new simple contractible closed orbits (the equality $\dim \HC^0_{d}(M)=\dim \H_n(B;\Q)$ holds by \eqref{eq:CH} and the hypothesis that $c_B>n/2$).

\vskip .3cm
\noindent
{\bf Step 4. Existence of precisely $r_B$ simple contractible closed orbits}
\vskip .2cm

By step 3,
\[
\#\{1\leq i\leq r;\ \cz(\ga_i^{k_i})=d\} = \dim \H_n(B;\Q).
\]
By steps 1 and 2,
\[
\#\{1\leq i\leq r;\ \cz(\ga_i^{k_i})<d - 1\} + \#\{1\leq i\leq r;\ \cz(\ga_i^{k_i})>d + 1\} = r_B -  \dim \H_n(B;\Q). 
\]
But notice that $d$ is even and $d \pm 1$  are odd. Since $\cz(\ga_i^{k_i})$ is even, it cannot be equal to $d \pm 1$. Therefore,
\[
\#\{1\leq i\leq r;\ \cz(\ga_i^{k_i})<d - 1\text{ or }\cz(\ga_i^{k_i})>d + 1\text{ or }\cz(\ga_i^{k_i})=d\} = r.
\]

\section{Proof of Theorem \ref{thm:main orbifolds}}
\label{sec:proof main orbifolds}

Let us first prove that $\alpha$ has precisely $r_B$ contractible closed orbits. First of all, note that $\M$ does not need to have a symplectic filling. Thus, we will argue as in the proof of Theorem \ref{thm:main} under the assumption \ref{cond:NF2}, but for this we have to show that $\alpha$ is index-admissible. Let $\halpha$ be the lift of $\alpha$ to $M$.

\begin{lemma}
\label{lemma:bijection}
Let $\PP^0(\halpha)$ and $\PP^0(\alpha)$ be the set of (not necessarily simple) contractible closed orbits of $\halpha$ and $\alpha$ respectively. Then there exists a map $\psi: \PP^0(\halpha) \to \PP^0(\alpha)$ such that $\cz(\hga)=\cz(\psi(\hga))$. Moreover, this map has an inverse on the right $\rho: \PP^0(\alpha) \to \PP^0(\halpha)$.
\end{lemma}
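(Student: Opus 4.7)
Let $\pi\colon M\to\M$ denote the quotient map. Since $G\subset T^{d+1}$ acts freely on $M$, $\pi$ is a regular covering with deck group $G$, and by construction $\halpha=\pi^{\ast}\alpha$. In particular, the Reeb vector fields $R_{\halpha}$ and $R_{\alpha}$ are $\pi$-related and $\pi$ carries parametrized Reeb trajectories of $\halpha$ to Reeb trajectories of $\alpha$ of the same period. The plan is to define $\psi$ as projection of orbits along $\pi$ and $\rho$ as a choice of lift, and then to check that the Conley--Zehnder indices are preserved by the identification of trivializations coming from lifting capping disks.

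For $\hga\in\PP^{0}(\halpha)$ set $\psi(\hga):=\pi\circ\hga$. This is automatically a closed Reeb orbit of $\alpha$, and $\pi$ sends a null-homotopy of $\hga$ to a null-homotopy of $\psi(\hga)$, so $\psi(\hga)\in\PP^{0}(\alpha)$. Conversely, for each $\ga\in\PP^{0}(\alpha)$ I fix a point $\tilde p\in\pi^{-1}(\ga(0))$ and let $\rho(\ga)$ be the unique integral curve of $R_{\halpha}$ through $\tilde p$ projecting to $\ga$. A null-homotopy of $\ga$ is a map $D^{2}\to\M$; since $D^{2}$ is simply connected, the covering-homotopy theorem lifts it to $M$ with $\tilde p$ fixed, which simultaneously shows that $\rho(\ga)$ closes up and bounds a disk in $M$, hence $\rho(\ga)\in\PP^{0}(\halpha)$. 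The identity $\pi\circ\rho(\ga)=\ga$ is tautological, giving $\psi\circ\rho=\mathrm{id}$.

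For the index equality $\cz(\hga)=\cz(\psi(\hga))$ I take any capping disk $u\colon D^{2}\to\M$ of $\psi(\hga)$ and lift it to a capping disk $\tilde u\colon D^{2}\to M$ of $\hga$, which is possible because $D^{2}$ is simply connected. Since $d\pi$ identifies the contact distribution on $M$ with the one on $\M$ as symplectic vector bundles over corresponding points, a symplectic trivialization of $u^{\ast}\xi$ pulls back to a trivialization of $\tilde u^{\ast}\xi$, and in these trivializations the linearized Reeb flows along $\hga$ and along $\psi(\hga)$ are the same symplectic path, so their Conley--Zehnder indices coincide. Independence of the choice of capping disk (needed so the index attached to $\hga$ and to $\psi(\hga)$ is intrinsic) is guaranteed by $c_{1}(\xi)$ being torsion, as already handled in the proof of Proposition \ref{prop:CH}.

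The only delicate point, which is not a genuine obstacle but worth stating, is that $\rho$ is not a two-sided inverse: if $\hga(0)\neq\tilde p$ one recovers $\rho(\psi(\hga))=g\cdot\hga$ for the unique $g\in G$ sending $\hga(0)$ to $\tilde p$. This is exactly compatible with the statement, and in fact shows that contractible orbits of $\alpha$ correspond to $G$-orbits of contractible orbits of $\halpha$; this $G$-action is the mechanism that will be exploited afterward in the proof of Theorem \ref{thm:main2} to upgrade the count of contractible orbits of $\alpha$ to symmetric orbits of $\halpha$.
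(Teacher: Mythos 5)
Your proof is correct and follows essentially the same route as the paper: define $\psi$ by projection, build $\rho$ by lifting through a chosen point and use a lift of a capping disk to see the lift closes up and is contractible, and identify the indices by pulling back trivializations along the covering map. You spell out the index equality and the right-inverse asymmetry more explicitly than the paper does, but the underlying argument is identical.
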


\begin{proof}
Let $\hga$ be a contractible closed orbit of $\halpha$ and $\tau: M \to \M$ be the quotient projection. Define $\psi(\hga)=\tau\circ\hga$. Clearly, this is a contractible closed orbit of $\alpha$ with the same index. To construct $\rho$, let $\ga$ be a contractible closed orbit of $\alpha$ and choose a point $x_0 \in \tau^{-1}(\ga(0))$. Since $\tau: M \to \M$ is a finite covering and $\ga$ is contractible, we have that $\ga$ admits a closed lift $\hga$ such that $\hga(0)=x_0$. This lift is also contractible because given a capping disk $f$ of $\ga$ (i.e. a continuous map $f: D^2 \to \M$ such that $f|_{\partial D^2}=\ga$) we have that $f$ admits a lift $\hat f$ to $M$ such that $\hat f|_{\partial D^2}=\hga$. Define $\rho(\ga)=\hga$. Then $\psi(\rho(\ga))=\psi(\hga)=\ga$ as desired.
\end{proof}

It follows from the first statement of the previous lemma that $\halpha$ is lacunary. Since $M$ has a filling satisfying the assumption \ref{cond:F2} of Theorem \ref{thm:main}, we can consider the positive equivariant symplectic homology of $M$. As explained in Section \ref{sec:esh}, this is the homology of the complex $\CC^0_*(\halpha)$ generated by the contractible good orbits of $\halpha$. Since $\halpha$ is lacunary, every periodic orbit is good and the differential of $\CC^0_*(\halpha)$ vanishes. Therefore,
\[
\cz(\hga) \geq \kmin=\min\{k \in \Z;\ \HC^0_k(M)\neq 0\}
\]
for every contractible closed orbit $\hga$ of $\halpha$. But, by \eqref{eq:CH},
\[
\HC^0_\ast(M) \cong \bigoplus_{m\in\N} \H_{\ast -2mc_B+n} (B; \Nov)
\]
which implies that $\kmin=2c_B-n$. By hypothesis, $c_B\geq 2$ and therefore  $\kmin>3-n$. Hence, $\halpha$ is index-admissible and so is $\alpha$, since the map $\psi$ in Lemma \ref{lemma:bijection} is surjective.

Thus, we can argue as in Theorem \ref{thm:main} under the assumption \ref{cond:NF2}. As mentioned in Remark \ref{rmk:key point}, the assumption that $M$ is a prequantization in Theorem \ref{thm:main} is needed only to achieve the isomorphism \eqref{eq:CH}. Thus, it is enough to show that
\begin{equation}
\label{eq:CH-orbifold}
\HC^0_\ast(\M) \cong \bigoplus_{m\in\N} \H_{\ast -2mc_B+n} (B; \Nov).
 \end{equation}

To prove \eqref{eq:CH-orbifold}, we will construct a non-degenerate, index-admissible and lacunary contact form $\eta$ on $\M$ such that
\begin{equation}
\label{eq:complex eta}
\CC^0_*(\eta) \cong \bigoplus_{m\in\N} \H_{\ast -2mc_B+n} (B; \Nov)
\end{equation}
which readily implies \eqref{eq:CH-orbifold}.

\begin{remark}
Let $\bxi=\tau_*\xi$ be the contact structure on $\M$, where $\tau: M \to \M$, as before, is the quotient projection. To define the equivariant symplectic homology of contractible orbits $\HC^0_*(\M)$ using the symplectization of $\M$ (with an integral grading) it is enough to have that $c_1(\bxi)|_{\pi_2(\M)}=0$: we do not need that $c_1(\bxi)|_{H_2(\M;\Q)}=0$ as in assumption \ref{cond:NF2}. Indeed, this assumption is needed in Theorem \ref{thm:main} in order to compute $\HC^0_*(\M)$ in item (b) of Proposition \ref{prop:CH}. Here, we do not need this since we compute $\HC^0_*(\M)$ directly from the construction of $\eta$. The condition $c_1(\bxi)|_{\pi_2(\M)}=0$ holds because, by the assumption that $(M,\xi)$ satisfies \ref{cond:F2}, we have that $c_1(\xi)|_{\pi_2(M)}=0$ which is equivalent to the condition $c_1(\bxi)|_{\pi_2(\M)}=0$ since the induced map $\tau_\#: \pi_2(M) \to \pi_2(\M)$ is an isomorphism.
\end{remark}

Consider the Hamiltonian $T^d$-action on $B$. Let $\{X_1,\dots,X_d\}$ be a basis of the Lie algebra of $T^d$ such that the corresponding 1-parameter subgroups are circles. Denote by $\vr^i_t$ the action on $B$ of the circle corresponding to $X_i$. Without fear of ambiguity, we will denote by $X_i$ the Hamiltonian vector field of $\vr^i_t$ and let $H_i: B \to \R$ be the Hamiltonian function that we will assume, without loss of generality, to be a positive function.

\begin{lemma}
There exists a compact 1-parameter subgroup of $T^d$ whose corresponding circle action on $B$ has the property that its fixed point set coincides with the fixed point set of the $T^d$-action.
\end{lemma}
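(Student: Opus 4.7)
My plan is to exhibit an integral element $X \in \mathfrak{t}^d$ that is in generic position relative to the isotropy weights at the $T^d$-fixed points, and then verify that the circle $S^1 := \overline{\exp(\mathbb{R} X)}$ has the desired fixed point set. Let $\{p_1,\dots,p_N\}$ denote the (finite) fixed point set of the $T^d$-action on $B$. At each $p_k$ the isotropy representation splits $T_{p_k}B$ into two-dimensional weight subspaces with integral weights $\alpha_1^{p_k},\dots,\alpha_n^{p_k} \in (\mathfrak{t}^d)^*$, all nonzero since $p_k$ is isolated. This produces finitely many rational hyperplanes $\ker\alpha_i^{p_k} \subset \mathfrak{t}^d$, and I would pick $X$ in the integer lattice $\bigoplus_{i=1}^d \mathbb{Z} X_i$ avoiding all of them; such an $X$ exists because a finite union of proper rational hyperplanes cannot cover a lattice of full rank. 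Since $X$ is integral, the closure of $\exp(\mathbb{R} X)$ is a circle subgroup $S^1 \subset T^d$.

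Next I would show that $\mathrm{Fix}(S^1) = \mathrm{Fix}(T^d)$. Locally the inclusion $\mathrm{Fix}(S^1) \supseteq \mathrm{Fix}(T^d)$ is clear, and the reverse at each $p_k$ is immediate: $S^1$ acts on $T_{p_k}B$ with weights $\alpha_i^{p_k}(X)$, all nonzero by construction, so $p_k$ is an isolated point of $\mathrm{Fix}(S^1)$. To rule out additional components, let $F$ be a connected component of $\mathrm{Fix}(S^1)$. Since the $S^1$-action is symplectic, $F$ is a closed symplectic submanifold of $B$, and since $T^d$ is abelian it preserves $F$. The restriction of the $T^d$-moment map to $F$ is a moment map for the restricted (Hamiltonian) $T^d$-action on $F$, and such an action on a compact symplectic manifold must have fixed points; any such point lies in $\mathrm{Fix}(T^d) \cap F$ and so equals some $p_k$. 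But $p_k$ is isolated in $\mathrm{Fix}(S^1)$, forcing $F = \{p_k\}$ and hence $\mathrm{Fix}(S^1) = \mathrm{Fix}(T^d)$.

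The point I expect to need the most care is ensuring that the generic $X$ can be chosen in the integer lattice rather than merely as a real vector (otherwise the resulting one-parameter subgroup is only a dense winding in some subtorus, not a circle). This is handled by the observation that the bad locus is a finite union of \emph{rational} hyperplanes, so its intersection with $\mathbb{Z}^d$ consists of finitely many sublattices of rank at most $d-1$, which cannot exhaust $\mathbb{Z}^d$; any integer lattice point in the complement does the job.
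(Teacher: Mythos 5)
Your proof is correct, but it takes a genuinely different route from the paper's. The paper's argument is considerably shorter: since $B$ is compact, the $T^d$-action has only finitely many isotropy subgroups $H_1,\dots,H_m$, of which at most one (say $H_m = T^d$) has dimension $d$; the Lie algebras of the other $H_i$ are proper rational subspaces of $\mathfrak{t}^d$, so one can pick a rational direction outside all of them, and the resulting circle subgroup is contained in the isotropy group of $p$ precisely when that isotropy group is $T^d$. That immediately gives $\mathrm{Fix}(S^1) = \mathrm{Fix}(T^d)$ with no further work. You instead avoid the kernels of the isotropy weights at the finitely many fixed points; this only directly controls the circle's fixed-point set \emph{near} each $p_k$, so you need the extra global step — identifying a putative extra component $F$ of $\mathrm{Fix}(S^1)$ as a compact symplectic submanifold carrying a Hamiltonian $T^d$-action, invoking existence of $T^d$-fixed points on $F$, and then using isolatedness to collapse $F$ to a point. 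That step is correct and the reasoning is standard, but it is work the paper's choice of ``bad locus'' (proper isotropy subgroups rather than weight kernels at fixed points) makes unnecessary, since membership in an isotropy subgroup characterizes fixed points globally, not just infinitesimally at the $p_k$. Both arguments rely on the same rationality observation to get a genuine circle rather than a dense winding.
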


\begin{proof}
Since $B$ is closed, this action has finitely many isotropy subgroups $G_1,\dots,G_m$. At most one of these subgroups has dimension $d$, say $G_m$. Since the codimension of all other stabilizers $G_1,\dots,G_{m-1}$ is positive, we can find a compact 1-parameter subgroup of $T^d$ which is not contained in any $G_i$ with $1\leq i <m$. The existence of this 1-parameter subgroup follows from the fact the union of the compact 1-parameter subgroups of $T^d$ is a dense subset of $T^d$. This is the desired subgroup.
\end{proof}

Thus, we can assume, without loss of generality, that $X_1$ generates a circle action whose fixed points coincide with the fixed points of the $T^d$-action. Let $\beta$ be the connection form on $M$ and consider the contact forms
\[
\eta_i = \beta/\hH_i
\]
where $\hH_i = H_i \circ \pi$ with $\pi: M \to B$ being the quotient projection. We have that the Reeb vector field of $\eta_i$ is given by
\[
R_i = \hH_i R_\beta + X_i^h
\]
where $R_\beta$ is the Reeb flow of $\beta$ (that generates the circle action of the prequantization $M$ whose orbits are the fibers) and $X_i^h$ is the horizontal lift of $X_i$; see \cite[Lemma 3.4]{AGZ}. The flow of $R_i$ generates a circle action \cite[Proposition 3.6]{AGZ} and commutes with the flow of $R_\beta$ \cite[Lemma 3.8]{AGZ}. We also have that $[R_i,R_j]=0$ for every $i$ and $j$. As a matter of fact,
\begin{align*}
[R_i,R_j] & = [\hH_i R_\beta + X_i^h,\hH_j R_\beta + X_j^h] \\
& = [\hH_i R_\beta,\hH_j R_\beta] + [\hH_i R_\beta,X_j^h] + [X_i^h,\hH_j R_\beta] + [X_i^h,X_j^h] \\
& = [X_i^h,X_j^h] \\
& = [X_i,X_j]^h + \om(X_i,X_j)R_\beta \\
& = 0,
\end{align*}
where $[\hH_i R_\beta,\hH_j R_\beta]=0$ because $\hH_i$ and $\hH_j$ are invariant by the flow of $R_\beta$, $[\hH_i R_\beta,X_j^h]=0$ (and similarly $[X_i^h,\hH_j R_\beta]=0$) since $[R_\beta,X_j^h]=0$ (see \cite[Lemma 3.8]{AGZ}) and $X_j^h(\hH_i)=X_j(H_i)=\{H_j,H_i\}=0$, $[X_i,X_j]^h=0$ is the horizontal lift of $[X_i,X_j]$, and $\om(X_i,X_j)=0$ because $X_i$ and $X_j$ commute.

Thus, $R_1,\dots,R_{d+1}$ generate a contact $T^{d+1}$-action on $M$, where $R_{d+1}:=R_\beta$. We claim that this action preserves $\eta_i$ for every $i$. Indeed, let $g \in T^{d+1}$. Since this action preserves the contact structure $\xi=\ker\beta$ we have that $g^*\eta_i=f\eta_i$ for some function $f$. To conclude that $f\equiv 1$ it is enough to show that $g^*\eta_i(R_i)=1$. But $g_*(R_i)=R_i$ because $g=\phi^1_{t_1} \circ \dots \circ \phi^{d+1}_{t_{d+1}}$ for some $(t_1,\dots,t_{d+1}) \in T^{d+1}$, where $\phi^j_{t}$ is the flow of $R_j$, and $d\phi^j_{t}(R_i)=R_i$ for every $j$ and $t$ since $R_1,\dots,R_{d+1}$ commute.

Now, let $\ep>0$ be an irrational number and define $\eta_1^\ep=\beta/(\hH_1+\ep)$ so that the Reeb vector field of $\eta_1^\ep$ is $R_1^\ep=R_1 + \ep R_{d+1}$. By the previous discussion, we have that the $T^{d+1}$-action preserves $\eta^\ep_1$ as well: given $g \in T^{d+1}$ we have that $g^*\eta^\ep_1=f \eta^\ep_1$ for some function $f$ and  $g_*(R_1^\ep)=R_1^\ep$ which implies that $f \equiv 1$. Consequently, $\eta^\ep_1$ induces a contact form $\eta$ on $\M$. We claim that $\eta$ is the desired contact form satisfying \eqref{eq:complex eta}.

As a matter of fact, note that the contractible periodic orbits of $\eta^\ep_1$ and $\eta$ are precisely the contractible iterations of the fibers over the fixed points of the torus action on $B$ (see \cite[Proposition 3.9]{AGZ}) and that all these orbits are elliptic. Indeed, note that the contractible orbits of $\eta^\ep_1$ projects to contractible orbits of $\eta$ and, since $M \to \M$ is a covering, every contractible orbit of $\eta$ is a projection a contractible orbit of $\eta^\ep_1$ (cf. Lemma \ref{lemma:bijection}); therefore, the contractible orbits of $\eta$ are elliptic if, and only if, so are the contractible orbits of $\eta^\ep_1$. But, since $R_1^\ep = (\hH_1+\ep)R_\beta + X_1^h$ and the vector fields $(\hH_1+\ep)R_\beta$  and $X_1^h$ commute, it is easy to see that every contractible orbit of $R_1^\ep$ is elliptic because every singularity of $X_1$ is elliptic. Therefore, $\eta^\ep_1$ and $\eta$ are lacunary. It is also easy to see that, choosing $\ep$ properly, they are also non-degenerate (because one can choose $\ep$ such that, for every fixed point $p$ of the $S^1$-action generated by $X_1$, all eigenvalues of $d\phi_p^\ep(p)$ are of the form $e^{2\pi i\theta}$ with $\theta \in \R\setminus\Q$, where $\phi_p^\ep$ is the time $1/(H_1(p)+\ep)$ map of the flow of $X_1$). Moreover, we have that the map $\psi: \PP^0(\eta^\ep_1) \to \PP^0(\eta)$ furnished by Lemma \ref{lemma:bijection} is a bijection. Indeed, since the contractible periodic orbits of $\eta^\ep_1$ are precisely the contractible iterations of the fibers over the fixed points of the torus action on $B$, we have that the images of theses orbits coincide with the corresponding images of the orbits of the $T^{d+1}$-action and consequently the closed orbits of $\eta^\ep_1$ are symmetric with respect to the $G$-action. It easily follows from this that $\psi$ is injective. Therefore,
\[
\CC^0_*(\eta^\ep_1) \cong \CC^0_*(\eta).
\]
But, since $\eta^\ep_1$ is lacunary, the differential in $\CC^0_*(\eta^\ep_1)$ vanishes and consequently, by \eqref{eq:CH},
\begin{equation}
\CC^0_*(\eta^\ep_1) \cong \HC^0_*(M) \cong \bigoplus_{m\in\N} H_{\ast -2mc_B+n} (B; \Nov),
\end{equation}
proving \eqref{eq:complex eta}.

\begin{remark}
\label{rmk:finsler}
When $M$ is the unit cosphere bundle of a CROSS $N$ with the flow of $R_\beta$ being the (periodic) geodesic flow, the contact form $\eta^\ep_1$ can be chosen such that it is induced by a Finsler metric. In this way, we get the Katok-Ziller Finsler metrics \cite{Zil}. When the action of $G$ on $S^*N$ is the lift of a free $G$-action on $N$ then $\eta$ is a contact form on $S^*(N/G)$ also induced by a Finsler metric on $N/G$.
\end{remark}

Finally, let us prove that the lifts of the contractible closed orbits of $\alpha$ to $M$ are symmetric. Let $\{\ga_1,\dots,\ga_{r_B}\}$ be the set of simple contractible orbits of $\alpha$ and $\{\hga_1,\dots,\hga_{r_B}\}$ be lifts of these orbits, which are contractible closed orbits of the lifted contact form $\halpha$ on $M$. These lifts are also simple contractible orbits. By Theorem \ref{thm:main}, $\halpha$ has precisely $r_B$ simple contractible closed orbits. If one of the orbits $\hga_i$ is not symmetric,  we would have some $g \in G$ such that $\Im(\hga_i) \neq \Im(g(\hga_i))$ and consequently we would have more than $r_B$ simple contractible closed orbits of $\halpha$, a contradiction.

\section{Multiplicity of periodic orbits for lens spaces and their unit cosphere bundles}
\label{sec:multiplicity}

As in Theorem \ref{thm:main} (see Remark \ref{rmk:key point}), the only point in the proof of Theorem \ref{thm:GGM} in \cite{GGM2} that uses that $M$ is a prequantization is to achieve the isomorphism \eqref{eq:CH} (when $B$ is spherically positive monotone). As proved in the previous section, this isomorphism holds for some prequantizations of orbifolds as in Theorem \ref{thm:main orbifolds}. In particular, as explained in the introduction, this isomorphism holds for lens spaces and their unit cosphere bundles. Therefore, we can conclude the following results.

\begin{theorem}
\label{thm:lens}
Let $\alpha$ be a non-degenerate contact form on $\L$ which is index-positive, index-admissible and has no contractible periodic orbits $\ga$ such that $\cz(\ga)=0$ if $n$ is odd or $\cz(\ga) \in \{0,\pm 1\}$ if $n$ is even. Then $\alpha$ carries at least $n+1$ geometrically distinct contractible periodic orbits.
\end{theorem}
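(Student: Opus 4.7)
The plan is to reduce the proof to a direct application of the argument establishing Theorem \ref{thm:GGM} in \cite{GGM2}. The first step is to check that the index-admissibility of $\alpha$ allows us to work without a filling, exactly as in hypothesis \ref{cond:NF2}, and that the positive equivariant symplectic homology for contractible orbits of $\L$ satisfies
\[
\HC^0_*(\L) \cong \bigoplus_{m \in \N} \H_{*-2m(n+1)+n}(\CP^n; \Nov),
\]
so that $r_B = n+1$ and $c_B = n+1$, which in particular gives $c_B > n/2$. This isomorphism is obtained exactly as in the proof of Theorem \ref{thm:main2}: using the lifted $T^{n+1}$-action on $S^{2n+1}$ descending to a contact $T^{n+1}/\Z_p$-action on $\L$, one constructs an explicit non-degenerate lacunary model contact form on $\L$ whose contractible closed orbits are precisely the (covered) fibers over the fixed points of the torus action on $\CP^n$, and whose chain complex therefore computes the right-hand side above.

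Once the homology computation is in hand, one invokes the observation recorded in Remark \ref{rmk:key point}: the proof of Theorem \ref{thm:GGM} in \cite{GGM2} uses the prequantization hypothesis \emph{only} to arrive at the isomorphism \eqref{eq:CH}. I would then run the argument verbatim for $\alpha$ on $\L$. Concretely, using the index-positivity of $\alpha$, the mean Euler characteristic $\chi_+(\L)$ is well-defined and can be computed from the homology as $(-1)^n (n+1)/(2(n+1))$; combined with the resonance relation \eqref{eq:resonance}, this pins down $\sum_i \hat\chi(\gamma_i)/\mi(\gamma_i)$. Next, assuming for contradiction that there are at most $n$ simple contractible periodic orbits $\gamma_1,\dots,\gamma_r$, Theorem \ref{thm:IRT} supplies integer vectors $(d^\pm, k_{1}^\pm,\dots,k_r^\pm)$ with $d^\pm$ divisible by $2c_B = 2(n+1)$ and with the indices $\cz(\gamma_i^{k_i^\pm})$ clustering sharply around $d^\pm$. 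Combining the Morse-type inequalities $c_m \geq b_m = \dim \HC^0_m(\L)$ for indices in windows around $d^\pm$ with the index-gap hypothesis (no orbit with $\cz(\gamma) = 0$ for $n$ odd, and no orbit with $\cz(\gamma) \in \{0,\pm 1\}$ for $n$ even, which eliminates parity-forbidden contributions to the counts around the pivot value) produces the lower bound $r \geq n+1$, a contradiction.

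The main obstacle is genuinely not dynamical but bookkeeping: one must check that every step of the argument in \cite[Section~5]{GGM2} depends on the underlying contact manifold only through the isomorphism \eqref{eq:CH} and the index inputs (positivity, admissibility, and the excluded index values). All the nontrivial inputs — the resonance relations of Section \ref{sec:resonance}, the index recurrence theorem of Section \ref{sec:IRT}, and the Morse-type inequalities derived from the vanishing of the differential on the lacunary part of the complex — are formulated in terms of these abstract data and apply equally well to an orbifold prequantization such as $\L$, so once \eqref{eq:CH-orbifold} has been verified for $\L$ the remainder of the proof transcribes without change.
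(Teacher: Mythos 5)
Your proposal is correct and takes essentially the same route as the paper: reduce to the observation that the proof of Theorem~\ref{thm:GGM} in \cite{GGM2} uses the prequantization structure only through the homology isomorphism \eqref{eq:CH}, and then verify \eqref{eq:CH} for $\L$ (with $r_B = n+1$, $c_B = n+1$) via the model contact form $\eta$ constructed from the lifted torus action as in the proof of Theorem~\ref{thm:main2}. The paper's Section~\ref{sec:multiplicity} is exactly this two-line reduction; the only stylistic difference is that you unfold the argument as a contradiction with the index-recurrence and resonance machinery spelled out, whereas the paper simply points to Remark~\ref{rmk:key point} and the isomorphism already established in Section~\ref{sec:proof main2}.
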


\begin{theorem}
\label{thm:S^*lens}
Let $\alpha$ be a non-degenerate contact form on $S^*\Lm$ which is index-positive and has no contractible periodic orbits $\ga$ such that  $\cz(\ga) \in \{0,\pm 1\}$. Then $\alpha$ carries at least $m+1$ geometrically distinct contractible periodic orbits.
\end{theorem}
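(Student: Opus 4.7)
The strategy is to deduce Theorem \ref{thm:S^*lens} from the argument of \cite{GGM2} that establishes Theorem \ref{thm:GGM} under assumption \ref{cond:F1}, after first verifying the isomorphism \eqref{eq:CH} for $\HC^0_\ast(S^*\Lm)$. Write $m=2d-1$, so $S^*\Lm$ has dimension $2n+1$ with $n=m-1$ even. By Table \ref{tab:prequantizations}, the prequantization $S^*S^m$ has base $B=G^+_2(\R^{m+1})$ with $r_B=m+1$ and $c_B=m-1$, so in particular $c_B>n/2$. Moreover $S^*\Lm=S^*S^m/\Z_p$, where $\Z_p$ acts freely on $S^*S^m$ as the lift of the isometric action \eqref{eq:Z_p action} on $S^m$; this $\Z_p$ sits as a subgroup of the contact $T^{d+1}$-action on $S^*S^m$ obtained by lifting the Hamiltonian $T^d$-action on $B$ coming from a maximal torus of $\SO(m+1)$, and the latter action has $m+1$ isolated fixed points given by the coordinate oriented $2$-planes in $\R^{m+1}$.

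The first step is to verify \eqref{eq:CH} for $\M=S^*\Lm$ using the construction from the proof of Theorem \ref{thm:main2} in Section \ref{sec:proof main2}. Picking a generic irrational perturbation $\eta_1^\eps=\beta/(\hH_1+\eps)$, one obtains a non-degenerate lacunary contact form $\eta$ on $S^*\Lm$ whose contractible periodic orbits are the contractible iterates of the Reeb fibers over the $T^d$-fixed points of $B$. Since $\eta$ is lacunary, the differential on $\CC^0_\ast(\eta)$ vanishes, and by Lemma \ref{lemma:bijection} the natural projection $\PP^0(\eta_1^\eps)\to\PP^0(\eta)$ is a bijection; hence $\CC^0_\ast(\eta)\cong\CC^0_\ast(\eta_1^\eps)\cong\HC^0_\ast(S^*S^m)$. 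The right-hand side is computed by Proposition \ref{prop:CH}(a) applied to the codisk filling $D^*S^m$, yielding exactly the right-hand side of \eqref{eq:CH}.

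With \eqref{eq:CH} in hand, I would run the \cite{GGM2} proof of Theorem \ref{thm:GGM} under \ref{cond:F1} verbatim for the given $\alpha$: the codisk bundle $D^*\Lm$ is a strong filling of $S^*\Lm$, we have $c_1(TD^*\Lm)|_{\pi_2(D^*\Lm)}=0$ since $\pi_2(\Lm)=0$ for $m\geq 3$, and the inclusion $\pi_1(S^*\Lm)\to\pi_1(D^*\Lm)$ is an isomorphism onto $\Z_p$. As emphasized in Remark \ref{rmk:key point}, the prequantization hypothesis enters the \cite{GGM2} argument only through \eqref{eq:CH}; once this is assured, the combination of the index recurrence Theorem \ref{thm:IRT}, the resonance relation \eqref{eq:resonance}, and the index assumption $\cz(\ga)\notin\{0,\pm 1\}$ appropriate for even $n$ yields the sharp lower bound of $r_B=m+1$ geometrically distinct contractible periodic orbits.

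The main obstacle is the first step: $\M=S^*\Lm$ is only a prequantization of a symplectic orbifold, so Proposition \ref{prop:CH} does not apply directly and one cannot do a Morse--Bott computation on the base. The trick of Section \ref{sec:proof main2} circumvents this by producing an explicit lacunary model contact form $\eta$ whose Floer complex is identified, via the free $\Z_p$-cover, with that of a lacunary contact form on $S^*S^m$ where the fillable computation of Proposition \ref{prop:CH}(a) does apply. Once this identification is established, the rest of the argument is a routine transcription of the CROSS case in \cite{GGM2}.
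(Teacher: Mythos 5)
Your proposal is correct and follows essentially the same route as the paper: establish \eqref{eq:CH} for $S^*\Lm$ by means of the lacunary model contact form from Section \ref{sec:proof main2}, observe (Remark \ref{rmk:key point}) that the prequantization hypothesis in the \cite{GGM2} proof of Theorem \ref{thm:GGM} enters only through this isomorphism, and note that the codisk filling $D^*\Lm$ satisfies \ref{cond:F2} so no index-admissibility hypothesis is required. The only minor slip is an attribution one: Lemma \ref{lemma:bijection} by itself gives only surjectivity of $\PP^0(\eta_1^\eps)\to\PP^0(\eta)$; the injectivity uses the extra symmetry argument from the proof of Theorem \ref{thm:main2}, which you do in fact invoke.
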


Note that in Theorem \ref{thm:lens} we assume that $\alpha$ is index-admissible because a lens space does not admit, in general, a filling satisfying the assumption \ref{cond:F2} of Theorem \ref{thm:main}. This hypothesis is not necessary in Theorem \ref{thm:S^*lens} because $S^*\Lm$ satisfies this assumption. Moreover, note that the dimension of $S^*\Lm$ is $2m-1=2(m-1)+1$ with $m-1$ even since $m$ is odd.

\begin{remark}
Using symplectic homology of contact manifolds with orbifold fillings, developed in \cite{GZ}, it is possible that the assumption in Theorem \ref{thm:lens} that $\alpha$ is index-admissible can be dropped.
\end{remark}

\section{Final questions}
\label{sec:questions}

To finish this work, let us pose some final questions. To the best of our knowledge, all the examples known so far of contact manifolds admitting a contact form with finitely many simple periodic orbits are prequantizations of orbifolds.

\vskip .3cm
\noindent {\bf Question:} Is there an example of a contact manifold admitting a contact form with finitely many simple periodic orbits that is not a prequantization of an orbifold?
\vskip .2cm

The answer is negative in dimension three if we assume that the first Chern class of the contact structure is torsion \cite{CGHHL1,CGHHL2}. 

As mentioned in the introduction, all the examples that we know so far of contact forms with finitely many closed orbits are non-degenerate and have only elliptic closed orbits. This raises the following natural question.

\vskip .3cm
\noindent {\bf Question:} Is there an example of a contact form with finitely many closed orbits possessing non-elliptic orbits?
\vskip .2cm

The answer is also negative in dimension three if we assume that the first Chern class of the contact structure is torsion \cite{CGHHL1,CGHHL2}.

Another natural problem, related to the previous question, that arises from our results is the following:

\vskip .3cm
\noindent {\bf Question:} Is it true that a contact form with finitely many closed orbits is non-degenerate and lacunary?
\vskip .2cm

If it is true, we would conclude that every contact form on a prequantization $M$ with finite fundamental group satisfying the assumptions of Theorem \ref{thm:main} and admitting a nice filling, in the sense of assumption \ref{cond:F2}, has either $r_B$ or infinitely many closed orbits. It is true in dimension three if we assume that the first Chern class of the contact structure is torsion \cite{CGHHL1,CGHHL2}.

\appendix
\section{Dropping the assumption on the minimal Chern number\\
(joint with Baptiste Serraille)}
\label{appendix:c_B}

In what follows, we will show how to change the proof of Theorem \ref{thm:main} in order to drop the assumption $c_B>n/2$ used in Section \ref{sec:proof main}. As before, we split the argument in two cases, according to the parity of $n$.

The main point is the reestablishment of equations \eqref{eq:sumc_m-odd}, \eqref{eq:sumc_m-even}, \eqref{eq:sumb_m-odd} and \eqref{eq:sumb_m-even} without the assumption on $c_B$. The outcomes are  \eqref{eq:sumc_m-odd-2}, \eqref{eq:sumc_m-even-2}, \eqref{eq:sumb_m-odd-2} and \eqref{eq:sumb_m-even-2} respectively. (Note that the pairs of equations \eqref{eq:sumc_m-even}, \eqref{eq:sumb_m-even} and \eqref{eq:sumc_m-even-2}, \eqref{eq:sumb_m-even-2} are equivalent under the assumption $c_B>n/2$ because, in this case, by \eqref{eq:CH}, $b_0=0$.) For the first two equations, we have to deal with closed orbits with non-positive index, since without this assumption, it is not true that $\kmin\geq 1$ anymore; see \eqref{eq:minind}. This can be fixed redefining the number $\ell_0$ as
\[
\ell_0:=\max_{1\leq i \leq r}\{\min\{k_0 \in \N;\ \cz(\ga_i^{k+\ell}) \geq  \cz(\ga_i^{k}) + n + 3\ \forall k\geq 1\text{ and }\forall \ell \geq k_0\}\},
\]
which is well defined (i.e. is a finite number), as in Section \ref{sec:proof main}, by \eqref{eq:mi}. The last two equations are somewhat more involved, and can be dealt with using Lemma \ref{lemma:sumb_m} below.

\vskip .3cm
\noindent
{\bf Case 1. $n$ is odd}
\vskip .2cm

\vskip .3cm
\noindent
{\bf Step 1. $\alpha$ has at least $r_B/2$ simple contractible closed orbits}
\vskip .2cm

Choose $N$ in Theorem \ref{thm:IRT} such that $\ell_0+2 \leq \min_{1\leq i \leq r} k_i$. By property \ref{cond:ii} of Theorem \ref{thm:IRT},
\[
\cz(\ga_i^{k_i -\ell}) = d - \cz(\ga_i^\ell)
\]
\[
\cz(\ga_i^{k_i +\ell}) = d + \cz(\ga_i^\ell)
\]
for every $1 \leq \ell \leq \ell_0$. Therefore,
\begin{equation}
\label{eq:1}
\#\{\ell \in [-\ell_0,\ell_0]\setminus\{0\};\, \cz(\ga_i^{k_i +\ell}) \leq d-1\} = \#\{\ell \in [-\ell_0,\ell_0]\setminus\{0\};\, \cz(\ga_i^{k_i +\ell}) \geq d+1\}.
\end{equation}
(Note here that $k_i > \ell_0$.) Since there is no contractible closed orbit with index zero (because $n$ is odd), we get that each index $\cz(\ga_i^\ell)$ is either positive or negative and consequently,
\begin{equation}
\label{eq:2}
\#\{\ell \in [-\ell_0,\ell_0]\setminus\{0\};\, \cz(\ga_i^{k_i +\ell}) \leq d-1\} + \#\{\ell \in [-\ell_0,\ell_0]\setminus\{0\};\, \cz(\ga_i^{k_i +\ell}) \geq d+1\} = 2\ell_0.
\end{equation}
By the definition of $\ell_0$ and the fact that $\cz(\ga) < \mi(\ga)+n$ for every non-degenerate closed orbit $\ga$,
\begin{equation}
\label{eq:3}
\cz(\ga_i^{k_i-\ell}) \leq \cz(\ga_i^{k_i}) - (n+3) \leq \mi(\ga_i^{k_i}) + n - (n+3) \leq d+1 + n - (n+3) = d-2,
\end{equation}
for every $\ell_0+1 \leq \ell < k_i$, where the third inequality holds by property \ref{cond:i} of Theorem \ref{thm:IRT}. Similarly,
\begin{equation}
\label{eq:4}
\cz(\ga_i^{k_i+\ell}) \geq d+2,
\end{equation}
for all $\ell \geq \ell_0$.

Thus, we arrive at
\begin{align*}
\#\{j \in \N,\, j\neq k_i;\, \cz(\ga_i^j) \leq d\} & = \#\{1\leq j \leq k_i+\ell_0,\, j\neq k_i;\, \cz(\ga_i^j) \leq d\} \\
& = k_i - \ell_0 - 1 + \#\{k_i - \ell_0 \leq j \leq k_i+\ell_0,\, j\neq k_i;\, \cz(\ga_i^j) \leq d\} \\
& = k_i - \ell_0 - 1 + \frac12 \#\{k_i - \ell_0 \leq j \leq k_i+\ell_0,\, j\neq k_i\} \\
& = k_i - \ell_0 - 1 + \frac12 2\ell_0 \\
& = k_i - 1 \numberthis \label{eq:k_i},
\end{align*}
where the first equation follows \eqref{eq:4}, the second equation follows from \eqref{eq:3} and the third identity is a consequence of \eqref{eq:1} and \eqref{eq:2}. Hence, we conclude that
\begin{align*}
\sum_{m=\kmin}^{d} c_m & = \sum_{i=1}^r \#\{j \in \N;\ \cz(\ga_i^j)\leq d\} \\
& = \sum_{i=1}^r \#\{j \in \N,\, j\neq k_i;\, \cz(\ga_i^j) \leq d\} + \#\{1\leq i \leq r;\, \cz(\ga_i^{k_i})\leq d\} \\
& = \sum_{i=1}^r k_i  - r + (r-r_+) \\
& = \sum_{i=1}^r k_i - r_+,
\end{align*}
where, as in Section \ref{sec:proof main}, $r_+=\#\{1\leq i\leq r;\ \cz(\ga_i^{k_i})>d\}$ . Therefore, by Lemma \ref{lemma:resonance},
\begin{equation}
\label{eq:sumc_m-odd-2}
\sum_{m=\kmin}^{d} c_m = sr_B - r_+,
\end{equation}
as in \eqref{eq:sumc_m-odd}. (Note that we do not use any assumption on $c_B$ in Lemma \ref{lemma:resonance}.)

Now, we claim that
\begin{equation}
\label{eq:sumb_m-odd-2}
\sum_{m=\kmin}^{d} b_m = sr_B - r_B/2
\end{equation}
as in \eqref{eq:sumb_m-odd}. Assuming the claim, this first step follows from \eqref{eq:sumc_m-odd-2} and \eqref{eq:sumb_m-odd-2} just as in Section \ref{sec:proof main}. {\bf Step 2} and {\bf Step 3} follow as in the rest of the proof.

So to finish the argument when $n$ is odd it remains only to prove \eqref{eq:sumb_m-odd-2}. It is an immediate consequence of the next lemma (note that $n$ is odd and $\H_*(B;\Q)$ vanishes in odd degrees). Recall that $d=2sc_B$.

\begin{lemma}
\label{lemma:sumb_m}
We can choose $s$ arbitrarily large such that
\[
2\sum_{m=\kmin}^{d} b_m = (2s-1)r_B + \dim \H_n(B;\Q) + 2\sum_{m=s+1}^{2s-1} \dim \H_{n+2(s-m)c_B}(B;\Q).
\]
The result holds for any $n$ (odd or even).
\end{lemma}

\begin{proof}
Take $N$ big enough in Theorem \ref{thm:IRT} such that $2sc_B > 2n$. Change the grading of $\HC_*(M)$ to $*' = 4sc_B - *$ so that
\[
\HC_{*'}(M) \cong \bigoplus_{m \in \N} \H_{4sc_B-*'-2mc_B+n}(B;\Lambda).
\]
Thus, since $\kmin=2c_B-n$,
\begin{align*}
2\sum_{m=\kmin}^{d} b_m & = 2\sum_{*=2c_B-n}^{2sc_B} b_* \\
& = 2\sum_{*'=2sc_B}^{n+(4s-2)c_B} \sum_{m=1}^\infty \dim \H_{(4s-2m)c_B - *' + n}(B;\Q) \\
& = 2\sum_{*'=2sc_B}^{n+(4s-2)c_B} \sum_{m=1}^{2s-1} \dim \H_{(4s-2m)c_B - *' + n}(B;\Q) \\
& = \sum_{m=1}^{2s-1} \overbrace{\sum_{*'=2sc_B}^{n+(4s-2)c_B} \dim \H_{(4s-2m)c_B - *' + n}(B;\Q) + \dim \H_{n + *' - (4s-2m)c_B }(B;\Q)}^{d_m},
\end{align*}
where the third equality follows from the fact that $2sc_B > 2n$, the last equation holds by Poincar\'e duality and $d_m$ is defined as in the equation.

We have to prove that
\begin{equation}
\label{eq:claim}
\sum_{m=1}^{2s-1} d_m = (2s-1)r_B + \dim \H_n(B;\Q) + 2\sum_{m=s+1}^{2s-1} \dim \H_{n+2(s-m)c_B}(B;\Q).
\end{equation}
In order to do it, let us look at the first term of $d_m$. When $*'$ runs from $2sc_B$ to $(4s-2)c_B+n$, it sums up the ranks of the homology of $B$ from the degree $-2mc_B+2c_B\leq 0$ to $n+2(s-m)c_B$. Similarly, the second term of $d_m$ sums up the ranks of the homology of $B$ from the degree $n-2(s-m)c_B$ to $2n+2(m-1)c_B \geq 2n$.

Hence, we have the following computations of $d_m$ according to the relation between $m$ and $s$:
\begin{itemize}
\item $m<s\ \iff\ 2(s-m)c_B>0$
\begin{equation}
\label{eq:d_m1}
d_m = \sum_{j=0}^{n+2(s-m)c_B} \dim \H_j(B;\Q) + \sum_{j=n-2(s-m)c_B}^{2n}  \dim \H_j(B;\Q) = r_B + \sum_{j=n-2(s-m)c_B}^{n+2(s-m)c_B}  \dim \H_j(B;\Q).
\end{equation}

\item $m=s\ \iff\ 2(s-m)c_B=0$
\begin{equation}
\label{eq:d_m2}
d_m = \sum_{j=0}^{n} \dim \H_j(B;\Q) + \sum_{j=n}^{2n}  \dim \H_j(B;\Q) = r_B + \dim \H_n(B;\Q).
\end{equation}

\item $m>s\ \iff\ 2(s-m)c_B<0$
\begin{align*}
d_m & = \sum_{j=0}^{n+2(s-m)c_B} \dim \H_j(B;\Q) + \sum_{j=n-2(s-m)c_B}^{2n}  \dim \H_j(B;\Q) \\
& = r_B - \sum_{j=n+2(s-m)c_B}^{n-2(s-m)c_B}  \dim \H_j(B;\Q) + \dim \H_{n+2(s-m)c_B}(B;\Q) + \dim \H_{n-2(s-m)c_B}(B;\Q) \\
& = r_B - \sum_{j=n+2(s-m)c_B}^{n-2(s-m)c_B}  \dim \H_j(B;\Q) + 2\dim \H_{n+2(s-m)c_B}(B;\Q), \numberthis \label{eq:d_m3}
\end{align*}
where the last equality holds by Poincar\'e duality.
\end{itemize}
Clearly,
\begin{equation}
\label{eq:d_m4}
\sum_{m=1}^{s-1} \sum_{j=n-2(s-m)c_B}^{n+2(s-m)c_B}  \dim \H_j(B;\Q) = \sum_{m=s+1}^{2s-1} \sum_{j=n+2(s-m)c_B}^{n-2(s-m)c_B}  \dim \H_j(B;\Q).
\end{equation}
Hence, \eqref{eq:claim} follows from \eqref{eq:d_m1}, \eqref{eq:d_m2}, \eqref{eq:d_m3} and \eqref{eq:d_m4}.
\end{proof}

\vskip .3cm
\noindent
{\bf Case 2. $n$ is even}
\vskip .2cm

\vskip .3cm
\noindent
{\bf Step 1. $\alpha$ has at least $(r_B-\dim \H_n(B;\Q))/2$ simple contractible closed orbits}
\vskip .2cm

The identities \eqref{eq:1}, \eqref{eq:3} and \eqref{eq:4} still hold in the case where $n$ is even. However, the other counts will change. We point out that it becomes now possible for an orbit to have index $d$ (since both $d$ and $n$ are even). This can happen in two different ways: either it is an orbit of the form $\ga_i^{k_i}$ or an orbit of the form $\ga_i^{k_i+\ell}$ for some $\ell \in [-\ell_0,\ell_0]\setminus\{0\}$. (Indeed, by the definition of $\ell_0$, $\cz(\ga_i^{k_i\pm\ell})\neq d$ for every $\ell>\ell_0$.) In the last case, $\cz(\ga_i^\ell)=0$ and we have the pair of orbits $(\ga_i^{k_i-\ell},\ga_i^{k_i+\ell})$ with index $d$.

Define $c_{0,i}=\#\{j \in \N;\, \cz(\ga_i^j)=0\}$ and ${\bar c}_{d,i}=\#\{j \in [-\ell_0,\ell_0]\setminus\{0\};\, \cz(\ga_i^{k_i+j})=d\}$. By the previous discussion,
\begin{equation}
\label{eq:5}
{\bar c}_{d,i} = 2c_{0,i}.
\end{equation}
Now, let $a_i=\#\{\ell \in [-\ell_0,\ell_0]\setminus\{0\};\, \cz(\ga_i^{k_i +\ell}) \leq d-1\}$ and $b_i=\#\{\ell \in [-\ell_0,\ell_0]\setminus\{0\};\, \cz(\ga_i^{k_i +\ell}) \geq d+1\}$ be the two terms  in the left hand side of \eqref{eq:2}. Equation \eqref{eq:2} has to be replaced, in this case, with
\begin{equation}
\label{eq:6}
a_i + b_i + {\bar c}_{d,i} = 2\ell_0.
\end{equation}
Using \eqref{eq:1}, \eqref{eq:5} and \eqref{eq:6} we arrive at
\begin{equation}
\label{eq:7}
a_i + c_{0,i} = \ell_0.
\end{equation}
Now, equation \eqref{eq:k_i} becomes
\begin{align*}
\#\{j \in \N,\, j\neq k_i;\, \cz(\ga_i^j) \leq d\} & = \#\{1\leq j \leq k_i+\ell_0,\, j\neq k_i;\, \cz(\ga_i^j) \leq d\} \\
& = k_i - \ell_0 - 1 + \#\{k_i - \ell_0 \leq j \leq k_i+\ell_0,\, j\neq k_i;\, \cz(\ga_i^j) \leq d\} \\
& = k_i - \ell_0 - 1 + a_i + {\bar c}_{d,i} \\
& = k_i - \ell_0 - 1 + a_i + 2c_{0,i} \\
& = k_i - \ell_0 - 1 + \ell_0 + c_{0,i} \\
& = k_i - 1 + c_{0,i} \numberthis \label{eq:k_i2},
\end{align*}
where the first equation follows \eqref{eq:4}, the second equation follows from \eqref{eq:3}, the third equation is immediate from the definitions of $a_i$ and ${\bar c}_{d,i}$, the fourth equality holds by \eqref{eq:5} and the fifth identity is a consequence of \eqref{eq:7}.

Recall that $b_0 = \dim \HC^0_0(M) = \dim \oplus_{m=1}^\infty \H_{n-2mc_B}(B;\Q)$. Since $\alpha$ is lacunary, $\sum_{i=1}^r c_{0,i} = b_0$. Thus, arguing as before, but now using \eqref{eq:k_i2},
\begin{align*}
\sum_{m=\kmin}^{d+1} c_m & = \sum_{m=\kmin}^{d} c_m \\
& = \sum_{i=1}^r \#\{j \in \N;\ \cz(\ga_i^j)\leq d\} \\
& = \sum_{i=1}^r \#\{j \in \N,\, j\neq k_i;\, \cz(\ga_i^j) \leq d\} + \#\{1\leq i \leq r;\, \cz(\ga_i^{k_i})\leq d\} \\
& = \sum_{i=1}^r k_i - r_+ + \sum_{i=1}^r c_{0,i} \\
& = sr_B - r_+ + b_0 \numberthis \label{eq:sumc_m-even-2},
\end{align*}
where the first equality simply follows from the facts that $d$ and $n$ are even and $\alpha$ is lacunary. This is similar to \eqref{eq:sumc_m-even} but with the correction term $b_0$.

Note that taking $s$ big enough such that $2(1-s)c_B < -n$ we have that
\[
\sum_{m=s+1}^{2s-1} \dim \H_{n+2(s-m)c_B}(B;\Q) = b_0.
\]
Consequently, we conclude from Lemma \ref{lemma:sumb_m} that
\begin{equation}
\label{eq:sumb_m-even-2}
\sum_{m=\kmin}^{d+1} b_m = \sum_{m=\kmin}^{d} b_m = sr_B +b_0 - (r_B - \dim \H_n(B;\Q))/2,
\end{equation}
where the first identity holds, again, by the fact $d$ and $n$ are even. As before, this is the analogous of \eqref{eq:sumb_m-even} but with the correction term $b_0$.

Then, the first step follows from \eqref{eq:sumc_m-even-2} and \eqref{eq:sumb_m-even-2} as in Section \ref{sec:proof main}.

\vskip .2cm
\noindent
{\bf Step 2} is modified according to {\bf Step 1}.

\vskip .3cm
\noindent
{\bf Step 3. Existence of more $\dim \H_n(B;\Q)$ simple contractible closed orbits}
\vskip .2cm

As in Section \ref{sec:proof main}, from {\bf Step 1} and {\bf Step 2} we get $r_B - \dim \H_n(B;\Q)$ closed orbits given by the $\ga_i's$ such that $\cz(\ga_i^{k_i})<d - 1$ and $\cz(\ga_i^{k_i})>d + 1$. As mentioned earlier, the set of contractible orbits with index $d$ splits into two sets: orbits of the form $\ga_i^{k_i}$ such that $\cz(\ga_i^{k_i})=d$ and pairs of orbits $(\ga_i^{k_i-\ell},\ga_i^{k_i+\ell})$ with $\cz(\ga_i^\ell)=0$. The former set has cardinality $r-r_--r_+$ and the latter has cardinality $2b_0$. So we get
\[
c_d=2b_0+r-r_--r_+.
\] 

On the other hand, arguing as before and using Poincar\'e duality,
\begin{align*}
b_d & = \dim \bigoplus_{m=1}^\infty \H_{n+2(s-m)c_B}(B;\Q) \\
& = \sum_{m=s+1}^{2s-1} \dim \H_{n+2(s-m)c_B}(B;\Q) + \sum_{m=1}^{s-1} \dim \H_{n+2(s-m)c_B}(B;\Q) + \dim \H_n(B;\Q) \\
& = 2b_0 + \dim \H_n(B;\Q).
\end{align*}

Thus, since $c_d=b_d$, we have that $r-r_--r_+=\dim \H_n(B;\Q)$. It concludes the proof of {\bf Step 3}.

Finally, we finish the proof as in {\bf Step 4} of Section \ref{sec:proof main}.

\end{document}